\newtheorem{theorem}{Theorem}[section]
\newtheorem{lemma}[theorem]{Lemma}
\newtheorem{proposition}[theorem]{Proposition}
\theoremstyle{definition}
\theoremstyle{remark}
\newtheorem{remark}[theorem]{Remark}
\numberwithin{equation}{section}
\numberwithin{equation}{section}
\begin{document}

%
%
%
%
%
%
%
%
%

\title{Density properties for fractional Musielak-Sobolev spaces}

\author{EL-Houcine OUALI* \orcidlink{0009-0007-9106-6441}}
\address{{ Department of Mathematics and Computer Science},
	{A\"in Chock Faculty,  Hassan II University}, 
	{ B.P. 5366 Maarif, Casablanca},
	{Morocco}}
\email{oualihoucine4@gmail.com}

\author{Azeddine BAALAL}
\address{{ Department of Mathematics and Computer Science},
	{A\"in Chock Faculty,  Hassan II University}, 
	{ B.P. 5366 Maarif, Casablanca},
	{Morocco}}
\email{abaalal@gmail.com}
 \author{Mohamed BERGHOUT}
 	\address{{Laboratory of Partial Differential Equations, Algebra and Spectral Geometry, Department of Mathematics, Ibn Tofail University}, {P.O. Box 242-Kenitra 14000, Kenitra}, {Morocco}}
\email{Mohamed.berghout@uit.ac.ma; moh.berghout@gmail.com}

\subjclass{46E35,\; 46E30.}

\keywords{Fractional Musielak-Sobolev spaces,\; Modular spaces,\; Musielak-Orlicz function,\; Density properties.}
\date{}

\begin{abstract}
	  In this paper, we introduce a new fractional Musielak-Sobolev space $W^{s,\Phi_{x,y}}(\Omega)$ where $\Omega$ is an open subset in $\mathbb{R}^{N}$ and  we show some density properties of smooth and compactly supported functions in this space.
\end{abstract}

\maketitle

\section{Introduction}
In recent years there has been an increasing attention on problems involving the theory of fractional modular spaces, in particular the fractional order Orlicz-Sobolev spaces $W^{s,\Phi}(\Omega)$ (see \cite{BjSa2019,bot2020basic,bor2023anewclass,bsoh2020embedding}), and the fractional Sobolev spaces with variable exponents $W^{s,q(.),p(.,.)}(\Omega)$ (see \cite{barv2018onanew,babm2018density,km2023bourgan,dlrj2017traces,kjv2017fract,baalal2018traces}), which are two natural extensions of classical fractional Sobolev spaces $ W^{s,p}(\Omega)$ (see \cite{di2012hitchhikers,bms2019radial}),  and they are two special kinds of fractional Musielak-Sobolev spaces (see \cite{abss2022class, abss2023emb,aacs2023onfrac}). Particularly, one of this problems is the density of smooth and compactly supported functions in these spaces, see \cite{babm2018density,BjSa2019,ABS2019Intro,fiscella2015density}.

The objective of this paper is to provide some density properties of smooth and compactly supported functions in a new fractional Musielak-Sobolev space.

 We suppose that $\Omega$ is an open subset in $ \mathbb{R}^{N}$, $N\geqslant 1 $ and  $\Phi: \Omega \times \Omega \times[0, \infty) \rightarrow$ $[0, \infty)$ is a Carathéodory function defined by
\begin{equation}\label{eq1}
\Phi_{x,y}(t):=\Phi(x, y, t)=\int_0^t \varphi(x, y, s) d s,
\end{equation}
where $\varphi: \Omega \times \Omega \times[0, \infty) \rightarrow[0, \infty)$ is a function satisfying the following assumptions:
\begin{itemize}
\item[(i)] $\varphi(x, y, 0)=0$ and $\varphi(x, y, t)>0$ for any $(x, y) \in \Omega \times \Omega, t>0$;
\item[(ii)] $t\mapsto\varphi(x, y, t)$ is increasing on $[0, \infty)$;
\item[(iii)] $t\mapsto\varphi(x, y, t)$ is continuous on $[0, \infty)$, $\displaystyle\lim _{t \rightarrow 0} \varphi(x, y, t)=0$ and $\displaystyle\lim _{t \rightarrow \infty} \varphi(x, y, t)=\infty$, for all $(x,y)\in \Omega\times \Omega$.
\end{itemize}
It is not hard to show that hypotheses $(i)-(i i i)$ imply that $\Phi_{x,y}( t)$ is continuous, strictly increasing and convex in $t \geq 0$ for any $(x, y) \in \Omega \times \Omega$. Moreover, $\Phi_{x,y}(0)=0$, $\Phi_{x,y}(t)>0$ for all $t>0$ and $\Phi_{x,y}(t)$ has a sublinear decay at zero and a superlinear growth near infinity in the variable $t$ :
$$
\lim _{t \rightarrow 0^{+}} \frac{\Phi_{x,y}(t)}{t}=0, \quad \lim _{t \rightarrow \infty} \frac{\Phi_{x,y} (t)}{t}=\infty .
$$
A function $\Phi$ defined as before is known as generalized N-function.

We also consider the function $\widehat{\varphi}_x(t):=\widehat{\varphi}(x, t)=\varphi(x, x,t)$ for all $(x, t) \in \Omega \times [0,+\infty)$. This function is increasing homeomorphism from $\mathbb{R^{+}}$ onto itself. Let us set
\begin{equation}\label{eq2}
\widehat{\Phi}_x(t):=\widehat{\Phi}(x, t)=\int_0^t \widehat{\varphi}_x(\tau) \mathrm{d} \tau \text { for all } t \geqslant 0 .
\end{equation}
Then, $\widehat{\Phi}_x$ is also a generalized N-function.

\begin{remark}
 We know that for all $(x, y) \in \Omega \times \Omega$, $\Phi_{x, y}: \mathbb{R^{+}} \rightarrow \mathbb{R^{+}}$ is a strictly increasing continuous function. It follows that for each $(x, y) \in \Omega \times \Omega$, the function $t \mapsto$ $\Phi_{x, y}(t)$ has an inverse function denoted by $t \mapsto \Phi_{x, y}^{-1}(t)$. 
\end{remark}

For technical reasons, we will assume, throughout this paper, that :
\begin{itemize}
\item[(a)] $\delta K(x,y)\in L_{\Phi_{x,y}}(\Omega\times \Omega)$ where $\displaystyle K(x,y)=\frac{1}{|x-y|^s \Phi_{x,y}^{-1}\left(|x-y|^N\right)}$ and\\ $\displaystyle \delta=\min\left\lbrace 1,|x-y|\right\rbrace $.
\item[(b)] $\Phi_{x,y}$ and $\widehat{\Phi}_{x} $ are locally integrables, that is for any constant number $c > 0$ and for every compact set $A \subset \Omega$ we have : 

\begin{equation}\label{eq1.3}
\int_{A \times A}\Phi_{x,y}(c)dx dy < \infty  \textsl{ and } \int_{A}\widehat{\Phi}_{x}(c)dx < \infty .
\end{equation}
\item[(c)] $\Phi_{x,y}$ and $\widehat{\Phi}_{x} $ satisfy the following boundedness conditions : 
\begin{equation}\label{eq1.333}
\sup_{(x,y)\in \Omega\times \Omega}\Phi_{x,y}(1)<\infty \textsl{ and } \sup_{x\in\Omega}\widehat{\Phi}_{x}(1)<\infty.
\end{equation}

\item[(d)] 
\begin{equation}\label{eq1.4}
\Phi(x-z,y-z,t)=\Phi(x,y,t) \quad\quad \forall (x,y),(z,z)\in \Omega \times \Omega \textit{, } \forall t \geqslant 0.
\end{equation}

\item[(e)] There exist two positive constants $\varphi^{+}$and $\varphi^{-}$ such that
\begin{equation}\label{eq1.5}
1<\varphi^{-} \leqslant \frac{t \varphi_{x, y}(t)}{\Phi_{x, y}(t)} \leqslant \varphi^{+}<+\infty \quad \forall (x, y) \in \Omega \times \Omega, \quad \forall t \geqslant 0 .
\end{equation}
\end{itemize}
This relation implies that
\begin{equation}\label{eq1.6}
1<\varphi^{-} \leqslant \frac{t \widehat{\varphi}_x(t)}{\widehat{\Phi}_x(t)} \leqslant \varphi^{+}<+\infty, \quad \forall x \in \Omega,\quad \forall t \geqslant 0 .
\end{equation}

In light of assumption \eqref{eq1.5}, $\Phi_{x, y}$ and $\widehat{\Phi}_x$ satisfy the so-called $\Delta_2$-condition (see \cite{mmrv2008neu}), written $\Phi_{x, y} \in \Delta_2$ and $\widehat{\Phi}_x \in \Delta_2$, that is there exist two positive constants $C_1$ and $C_2$ such that
\begin{equation}
\Phi_{x, y}(2 t) \leqslant C_1 \Phi_{x, y}(t) \quad \text { for all }(x, y) \in \Omega \times \Omega, \quad \text { and all } t \geqslant 0 \text {, }
\end{equation}
and
\begin{equation}\label{1.6eq2}
\widehat{\Phi}_x(2 t) \leqslant C_2 \widehat{\Phi}_x(t) \quad \text { for all } x \in \Omega, \quad \text { and all } t \geqslant 0.
\end{equation}

For the function $\widehat{\Phi}_x$ given in \eqref{eq2}, the Musielak class is defined by
$$
K_{\widehat{\Phi}_x}(\Omega)=\left\{u: \Omega \longrightarrow \mathbb{R} \text { mesurable : } \int_{\Omega} \widehat{\Phi}_x(|u(x)|) \mathrm{d} x<\infty\right\} \text {, }
$$
and the Musielak space $L_{\widehat{\Phi}_x}(\Omega)$ is defined by
$$
L_{\widehat{\Phi}_x}(\Omega)=\left\{u: \Omega \longrightarrow \mathbb{R} \text { mesurable : } \int_{\Omega} \widehat{\Phi}_x(\lambda|u(x)|) \mathrm{d} x<\infty \text { for some } \lambda>0\right\} .
$$
The space $L_{\widehat{\Phi}_x}(\Omega)$ is a Banach space endowed with the Luxemburg norm
$$
\|u\|_{L_{\widehat{\Phi}_{x}}(\Omega)}=\inf \left\{\lambda>0: \int_{\Omega} \widehat{\Phi}_x\left(\frac{|u(x)|}{\lambda}\right) \mathrm{d} x \leqslant 1\right\} .
$$
The inequality \eqref{1.6eq2} implies that $ L_{\widehat{\Phi}_x}(\Omega)=K_{\widehat{\Phi}_x}(\Omega)$ (see \cite{musju1983Orlicz}).

Let $s\in (0,1)$, for the function $\Phi_{x,y}$ given in \eqref{eq1} we introduce the fractional Musielak-Sobolev space $W^{s,\Phi_{x, y}}(\Omega)$ as follows
$$
\begin{aligned}
&W^{s,\Phi_{x, y}}(\Omega)\\
&=\left\{u \in L_{\widehat{\Phi}_x}(\Omega): \int_{\Omega} \int_{\Omega} \Phi_{x, y}\left(\lambda|u(x)-u(y)|K(x,y)\right)\mathrm{d} x \mathrm{~d} y<\infty \textsl{ for some } \lambda >0 \right\}.
\end{aligned}
$$
This space is endowed with the norm
\begin{equation}\label{Norm}
\|u\|_{W^{s,\Phi_{x, y}}(\Omega)}:=\|u\|_{L_{\widehat{\Phi}_{x}}(\Omega)}+[u]_{s, \Phi_{x, y}},
\end{equation}
where $[.]_{s, \Phi_{x, y}}$ is the Gagliardo seminorm defined by
$$
[u]_{s, \Phi_{x, y}}=\inf \left\{\lambda>0: \int_{\Omega} \int_{\Omega} \Phi_{x, y}\left(\frac{|u(x)-u(y)|K(x,y)}{\lambda}\right)\mathrm{d} x \mathrm{~d} y \leqslant 1\right\} .
$$

\begin{remark}
\begin{itemize}

\item[(a)] The fractional  Musielak-Sobolev space was introduced in \cite{abss2022class} as follows 
$$
\begin{aligned}
&W^{s,\Phi_{x, y}}(\Omega)\\
&=\left\{u \in L_{\hat{\Phi}_x}(\Omega): \int_{\Omega} \int_{\Omega} \Phi_{x, y}\left(\frac{|u(x)-u(y)|}{\lambda|x-y|^{s}}\right)\frac{\mathrm{d} x \mathrm{~d} y}{|x-y|^{N}}<\infty \textsl{ for some  } \lambda>0 \right\}.
\end{aligned}
$$
We observe that this definition creates problems in the calculus and in the embedding results, for example, the Borel measure defined as $d \mu=\frac{d x d y}{|x-y|^N}$ is not finish in the neighbourhood of the origin.

\item[(b)] For the case: $\Phi_{x, y}(t)=\Phi(t)$, i.e., when $\Phi$ is independent of the variables $x$ and $y$, we have that $L_{\Phi}$ and $W^{s,\Phi}$ are Orlicz spaces and fractional Orlicz-Sobolev spaces respectively (see \cite{ABS2019Intro} ) such that
$$
\begin{aligned}
&W^{s,\Phi}(\Omega)\\
&=\left\{u \in L_{\Phi}(\Omega): \int_{\Omega} \int_{\Omega} \Phi\left(\frac{|u(x)-u(y)|}{\lambda|x-y|^s\Phi^{-1}\left(|x-y|^N \right) }\right) \mathrm{d} x \mathrm{~d} y<\infty \textsl{ for some }\lambda>0 \right\}.
\end{aligned}
$$
\item[(c)] For the case: $\Phi_{x, y}(t)=|t|^{p(x, y)}$ for all $(x, y) \in \Omega \times \Omega$, where $p:  \Omega \times \Omega \longrightarrow(1,+\infty)$ is a continuous bounded function such that
\begin{equation}
1<p^{-}=\min _{(x, y) \in  \Omega \times \Omega} p(x, y) \leqslant p(x, y) \leqslant p^{+}=\max _{(x, y) \in  \Omega \times \Omega} p(x, y)<+\infty,
\end{equation}
\end{itemize}
and $p$ is symmetric, that is,  $p(x, y)=p(y, x) \text { for all }(x, y) \in \Omega \times \Omega$.\\
If we set $\overline{p}(x)=p(x, x)$ for all $x \in \Omega$, then, we replace $L_{\widehat{\Phi}_x}$ by $L^{\overline{p}(x)}$, and $W^{s,\Phi_{x, y}}$ by $W^{s, p(x, y)}$ and we refer them as variable exponent Lebesgue spaces, and fractional Sobolev spaces with variable exponent respectively, (see \cite{baalal2018traces,barv2018onanew,babm2018density,abs2019eigenvalue,dlrj2017traces,kjv2017fract}), defined by
$$
L^{\overline{p}(x)}(\Omega)=\left\{u: \Omega \longrightarrow \mathbb{R} \text { measurable }: \int_{\Omega}|u(x)|^{\overline{p}(x)} \mathrm{d} x<+\infty\right\},
$$
and
$$
\begin{aligned}
&W^{s, p(x, y)}(\Omega)\\
&=\left\{u \in L^{\overline{p}(x)}(\Omega): \int_{\Omega \times \Omega} \frac{|u(x)-u(y)|^{p(x, y)}}{\lambda^{p(x, y)}|x-y|^{s p(x, y)+N}} \mathrm{~d} x \mathrm{~d} y<+\infty \text {, for some } \lambda>0\right\}.
\end{aligned}
$$
with the norm
$$
\|u\|_{W^{s, p(x, y)}(\Omega)}=\|u\|_{L^{\overline{p}(x)}(\Omega)}+[u]_{W^{s, p(x, y)}(\Omega)},
$$
where $[.]_{W^{s, p(x, y)}(\Omega)}$ is the Gagliardo seminorm with variable exponent given by
$$
[u]_{W^{s, p(x, y)}(\Omega)}=\inf \left\{\lambda>0: \int_{\Omega \times \Omega} \frac{|u(x)-u(y)|^{p(x, y)}}{\lambda^{p(x, y)}|x-y|^{N+s p(x, y)}} \mathrm{d} x \mathrm{~d} y \leqslant 1\right\} .
$$
\end{remark}
By the same way in \cite[Theorem 2.1]{abss2022class} it is easy to prove that $ W^{s,\Phi_{x, y}}(\Omega) $ is a separable and reflexive Banach space.

Let $s\in (0,1)$, we define the space $W^{s}K_{\Phi_{x,y}}(\Omega)$ as follows 

$$  W^s K_{\Phi_{x, y}}(\Omega)=\left\{u \in K_{\widehat{\Phi}_x}(\Omega): |u(x)-u(y)|K(x,y)\in K_{\Phi_{x,y}}(\Omega \times \Omega) \right\}.  $$ 

\begin{remark}
\begin{itemize}
\item[(a)] We have $W^s K_{\Phi_{x, y}}(\Omega) \subseteq W^{s,\Phi_{x, y}}(\Omega)$.

\item[(b)] $W^s K_{\Phi_{x, y}}(\Omega)= W^{s,\Phi_{x, y}}(\Omega)$ if and only if $ \Phi_{x,y}$ satisfies the $\Delta_{2}$-condition.
\end{itemize}
\end{remark}

Let $C(\overline{\Omega}) $ denote the space of uniformly continuous functions equipped with the supremum norm $\|u\|_{\infty}=\sup_{x\in \overline{\Omega}}|u(x)|$. By $C^k(\overline{\Omega}), k \in \mathbb{N}$, we denote the space of all functions $u$ such that $\partial_\alpha u:=\frac{\partial^{|\alpha|} u}{\partial^{\alpha_1} x_1 \cdots \partial^{\alpha_n} x_n} \in C(\overline{\Omega})$ for all multi-indices $\alpha=\left(\alpha_1, \alpha_2, \ldots, \alpha_n\right),|\alpha|:=\alpha_1+\alpha_2+\cdots+\alpha_n \leq k$. This space is equipped with the norm $\sup _{|\alpha| \leq k}\left\|\partial_\alpha f\right\|_{\infty}$. Let $C^{\infty}(\overline{\Omega})=\bigcap^{\infty}_{k=0} C^k(\overline{\Omega})$ and let  $C^{\infty}(\Omega)$ denote the set of all infinitely differentiable functions in $\Omega$. The subspace $C_0^{\infty}(\Omega)$ consist of all those functions in $C^{\infty}(\Omega)$ that have compact support in $\Omega$. We also denote by C a constant whose value may change from line to line.

We say that an open domain $\Omega \subset \mathbb{R}^N$ is a $W^{s,\Phi_{x,y}}$-extension domain if there exists a continuous linear extension operator
$$
\mathcal{E}:W^{s,\Phi_{x,y}}(\Omega) \longrightarrow W^{s,\Phi_{x,y}}\left(\mathbb{R}^N\right)
$$
such that $\left.\mathcal{E} u\right|_{\Omega}=u$ for each $u \in W^{s,\Phi_{x,y}}(\Omega)$. For example domains with  Lipschitz boundary are $W^{s,\Phi_{x,y}}$-extension domains (the proof is similar to the one of Theorem 2.4 in \cite{abss2023emb}).

This paper is organized as follows.  In section \ref{sec2}, we state technical and
elementary lemmas useful along the paper. The proof of the main results is given in Section \ref{sec3}.
The first main result is an approximation with a continuous and compactly
 supported function, see Theorem \ref{theo3.1}. The second main result is an approximation with smooth and compactly supported
 functions, see Theorem \ref{theo3.2}. The third result of our paper is the density of $C^{\infty}(\overline{\Omega})$ in $W^{s,\Phi_{x,y}}(\Omega)$, where $\Omega$ is a $W^{s,\Phi_{x,y}}$-extension domain, see Theorem \ref{theo3.3}.

\section{ Some preliminaries results}\label{sec2}

This section is devoted to the proof of some preliminary lemmas, which will be used throughout this paper.

From \cite[Lemma 2.2]{abss2022class} we have the following lemma.
\begin{lemma}\label{lemmapro}
Assume that \eqref{eq1.5} is satisfied. Then the following inequalities hold true
\begin{equation}
\Phi_{x, y}(\sigma t) \geqslant \sigma^{\varphi^{-}} \Phi_{x, y}(t) \quad \quad \forall t>0,\quad \forall\sigma>1,
\end{equation}
\begin{equation}
\Phi_{x, y}(\sigma t) \geqslant \sigma^{\varphi^{+}} \Phi_{x, y}(t),\quad \forall t>0, \quad \forall\sigma \in(0,1),
\end{equation}
\begin{equation}
\Phi_{x, y}(\sigma t) \leqslant \sigma^{\varphi^{+}} \Phi_{x, y}(t) \quad \forall t>0,\quad \forall\sigma>1,
\end{equation}
\begin{equation}
\Phi_{x, y}(t) \leqslant \sigma^{\varphi^{-}} \Phi_{x, y}\left(\frac{t}{\sigma}\right)\quad \forall t>0,\quad \forall \sigma \in(0,1).
\end{equation}
\end{lemma}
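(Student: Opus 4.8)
\emph{Proof proposal.} The plan is to freeze $(x,y)\in\Omega\times\Omega$ and $t>0$, and to study the single-variable function $g(\sigma):=\Phi_{x,y}(\sigma t)$ for $\sigma>0$. By assumption (iii) the map $s\mapsto\varphi(x,y,s)$ is continuous on $[0,\infty)$, so the integral representation \eqref{eq1} shows that $g$ is of class $C^1$ with $g'(\sigma)=t\,\varphi_{x,y}(\sigma t)$; moreover $g(\sigma)=\Phi_{x,y}(\sigma t)>0$ for every $\sigma>0$, hence $\ln g$ is well defined and differentiable on $(0,\infty)$.

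The key step is to read the two-sided bound \eqref{eq1.5}, evaluated at the point $\sigma t$, as a differential inequality for $\ln g$:
\[
\frac{\varphi^-}{\sigma}\;\le\;(\ln g)'(\sigma)\;=\;\frac{\sigma t\,\varphi_{x,y}(\sigma t)}{\sigma\,\Phi_{x,y}(\sigma t)}\;\le\;\frac{\varphi^+}{\sigma},\qquad \sigma>0.
\]
Integrating each member of this chain over $[1,\sigma]$ and exponentiating gives, for $\sigma>1$,
\[
\sigma^{\varphi^-}\Phi_{x,y}(t)\;\le\;\Phi_{x,y}(\sigma t)\;\le\;\sigma^{\varphi^+}\Phi_{x,y}(t),
\]
which are exactly the first and third inequalities. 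Integrating instead over $[\sigma,1]$ when $\sigma\in(0,1)$, so that $-\ln\sigma>0$, and rearranging yields
\[
\sigma^{\varphi^+}\Phi_{x,y}(t)\;\le\;\Phi_{x,y}(\sigma t)\;\le\;\sigma^{\varphi^-}\Phi_{x,y}(t),
\]
whose left-hand part is the second inequality. Finally, the fourth inequality follows from the right-hand part of the last display by the substitution $t\mapsto t/\sigma$ (keeping $\sigma\in(0,1)$): $\Phi_{x,y}(t)=\Phi_{x,y}\big(\sigma\cdot\tfrac{t}{\sigma}\big)\le\sigma^{\varphi^-}\Phi_{x,y}(t/\sigma)$.

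Since the constants $\varphi^-$ and $\varphi^+$ in \eqref{eq1.5} do not depend on $(x,y)$, all four estimates hold uniformly for $(x,y)\in\Omega\times\Omega$. I do not expect a genuine obstacle here: the argument is a one-variable calculus computation, and the only delicate points are the differentiability of $t\mapsto\Phi_{x,y}(t)$ with derivative $\varphi_{x,y}$ (which is what lets one pass from the modular bound \eqref{eq1.5} to the differential inequality for $\ln\Phi_{x,y}(\sigma t)$) and the careful tracking of signs and inequality directions when $\sigma<1$, where $\ln\sigma<0$.
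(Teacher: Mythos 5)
Your proof is correct, and the logarithmic-derivative argument you give is the standard (and essentially only natural) way to deduce these power-type bounds from the Simonenko-type condition \eqref{eq1.5}; the paper itself does not spell out a proof but cites \cite[Lemma 2.2]{abss2022class}, where the same differential-inequality-and-integrate technique is used. One small wording slip: when you pass to $\sigma\in(0,1)$ you integrate $(\ln g)'$ over $[\sigma,1]$ and obtain $\sigma^{\varphi^+}\Phi_{x,y}(t)\le\Phi_{x,y}(\sigma t)\le\sigma^{\varphi^-}\Phi_{x,y}(t)$; it is worth stating explicitly that the sign flip in the exponents comes from $\ln\sigma<0$, but your handling of the signs is correct, as is the final substitution $t\mapsto t/\sigma$ producing the fourth inequality.
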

From  \cite[Theorem~3.35]{vigelis2011musielak}  we obtain the following proposition.

\begin{proposition}\label{2prop1} Let $u,u_{n}\in L_{\widehat{\Phi}_{x}}(\Omega)$, $n\in \mathbb{N}$. Then the following statements are equivalent
\begin{itemize}
\item[(i)] $\displaystyle\lim_{n\longrightarrow +\infty} \|u_{n}-u\|_{L_{\widehat{\Phi}_{x}}(\Omega)}=0.$

\item[(ii)] $\displaystyle \lim_{n\longrightarrow +\infty} \int_{\Omega}\widehat{\Phi}_{x}(|u_{n}-u|)dx=0.$
\end{itemize} 

\end{proposition}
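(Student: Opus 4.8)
The plan is to prove the two implications separately, working throughout with the modular $\rho(v):=\int_{\Omega}\widehat{\Phi}_x(|v(x)|)\,dx$ of a measurable function $v$ on $\Omega$. The first thing I would record is that $\widehat{\Phi}_x$ inherits from \eqref{eq1.6} exactly the growth estimates that Lemma \ref{lemmapro} states for $\Phi_{x,y}$: the proof of Lemma \ref{lemmapro} uses only the ratio bound \eqref{eq1.5}, and \eqref{eq1.6} is its verbatim analogue for $\widehat{\Phi}_x$ and $\widehat{\varphi}_x$, so in particular $\widehat{\Phi}_x(\sigma t)\leqslant\sigma^{\varphi^{+}}\widehat{\Phi}_x(t)$ for all $t>0$ and $\sigma>1$. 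I would also use the elementary convexity inequality $\widehat{\Phi}_x(\lambda t)\leqslant\lambda\,\widehat{\Phi}_x(t)$ for $\lambda\in[0,1]$, which follows from $\widehat{\Phi}_x(0)=0$ together with convexity of $\widehat{\Phi}_x$ in $t$.

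For $(i)\Rightarrow(ii)$ — which uses no $\Delta_2$-type hypothesis — I would first show that $\rho(v)\leqslant\|v\|_{L_{\widehat{\Phi}_{x}}(\Omega)}$ whenever $\|v\|_{L_{\widehat{\Phi}_{x}}(\Omega)}\leqslant1$. Indeed, for any $\lambda$ with $\|v\|_{L_{\widehat{\Phi}_{x}}(\Omega)}<\lambda\leqslant1$ one has $\int_{\Omega}\widehat{\Phi}_x(|v|/\lambda)\,dx\leqslant1$ by definition of the Luxemburg norm (using that $\lambda\mapsto\int_{\Omega}\widehat{\Phi}_x(|v|/\lambda)\,dx$ is nonincreasing), hence $\rho(v)=\int_{\Omega}\widehat{\Phi}_x(\lambda\cdot|v|/\lambda)\,dx\leqslant\lambda\int_{\Omega}\widehat{\Phi}_x(|v|/\lambda)\,dx\leqslant\lambda$ by the convexity inequality; letting $\lambda\downarrow\|v\|_{L_{\widehat{\Phi}_{x}}(\Omega)}$ gives the claim. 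Applying this to $v=u_n-u$, which satisfies $\|u_n-u\|_{L_{\widehat{\Phi}_{x}}(\Omega)}\leqslant1$ for $n$ large by $(i)$, yields $\rho(u_n-u)\leqslant\|u_n-u\|_{L_{\widehat{\Phi}_{x}}(\Omega)}\to0$.

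For $(ii)\Rightarrow(i)$ I would fix an arbitrary $\varepsilon\in(0,1)$ and apply the growth estimate above with $\sigma=1/\varepsilon>1$ pointwise to $t=|u_n(x)-u(x)|$, then integrate, obtaining
\[
\int_{\Omega}\widehat{\Phi}_x\!\left(\frac{|u_n(x)-u(x)|}{\varepsilon}\right)dx\ \leqslant\ \varepsilon^{-\varphi^{+}}\,\rho(u_n-u).
\]
By $(ii)$ the right-hand side tends to $0$, so there is $n_\varepsilon$ such that the left-hand side is $\leqslant1$ for all $n\geqslant n_\varepsilon$; by definition of the Luxemburg norm this says $\|u_n-u\|_{L_{\widehat{\Phi}_{x}}(\Omega)}\leqslant\varepsilon$ for $n\geqslant n_\varepsilon$, and since $\varepsilon\in(0,1)$ was arbitrary, $(i)$ holds.

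I expect the main obstacle to be conceptual rather than computational: the implication $(ii)\Rightarrow(i)$ genuinely relies on the $\Delta_2$-type control of $\widehat{\Phi}_x$ furnished by \eqref{eq1.6}–\eqref{1.6eq2} (without it, modular convergence does not imply norm convergence), so the write-up must make clear that this standing assumption is precisely what powers that step. A minor technical point to handle with care is that the infimum defining the Luxemburg norm need not be attained a priori, which is why in $(i)\Rightarrow(ii)$ I argue with $\lambda>\|v\|_{L_{\widehat{\Phi}_{x}}(\Omega)}$ and pass to the limit rather than substituting $\lambda=\|v\|_{L_{\widehat{\Phi}_{x}}(\Omega)}$ directly. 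Alternatively, the statement is a special case of \cite[Theorem~3.35]{vigelis2011musielak}, which one could simply invoke.
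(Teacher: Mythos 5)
Your proof is correct, but it is genuinely different in character from what the paper does: the paper offers no argument at all for Proposition \ref{2prop1}, simply deducing it from \cite[Theorem~3.35]{vigelis2011musielak} (the alternative you mention in your closing line). Your route is the standard self-contained one: for $(i)\Rightarrow(ii)$ the unit-ball inequality $\rho(v)\leqslant\|v\|_{L_{\widehat{\Phi}_x}(\Omega)}$ obtained from convexity and $\widehat{\Phi}_x(0)=0$, approaching the Luxemburg norm from above since the infimum need not be attained; for $(ii)\Rightarrow(i)$ the pointwise estimate $\widehat{\Phi}_x(t/\varepsilon)\leqslant\varepsilon^{-\varphi^{+}}\widehat{\Phi}_x(t)$, which is indeed the $\widehat{\Phi}_x$-analogue of the third inequality of Lemma \ref{lemmapro} and follows from \eqref{eq1.6} by the same computation that \eqref{eq1.5} gives for $\Phi_{x,y}$. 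Both steps are sound (in $(i)\Rightarrow(ii)$ the borderline case $\|u_n-u\|_{L_{\widehat{\Phi}_x}(\Omega)}=1$ never arises, since under $(i)$ the norms are eventually strictly below $1$, so your restriction to $\lambda\in(\|v\|,1]$ costs nothing). What your version buys is transparency about hypotheses: it shows that $(i)\Rightarrow(ii)$ holds for any generalized N-function, while $(ii)\Rightarrow(i)$ is exactly where the upper bound $\varphi^{+}<\infty$ in \eqref{eq1.5}--\eqref{eq1.6}, i.e.\ the $\Delta_2$-type control \eqref{1.6eq2}, is consumed; the paper's citation is shorter but conceals this mechanism and imports a result stated in a more general modular-space framework.
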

\begin{proposition}\cite[Remark~2.1]{ayya2020someapp}\label{2prop2} Let $G$ be a generalized N-function satisfying \eqref{eq1.3} and the $\Delta_{2}$-condition. Then $C^{\infty}_{0}(\Omega)$ is dense in $ (L_{G}(\Omega), \|.\|_{L_{G}(\Omega)})$.

\end{proposition}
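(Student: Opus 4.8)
The plan is the classical truncation-plus-mollification scheme, carried out at the level of the modular $\rho_G(w):=\int_\Omega G_x(|w(x)|)\,dx$ rather than at the level of the norm, since this modular is not homogeneous. First I would observe that, because $G\in\Delta_2$, one has $L_G(\Omega)=K_G(\Omega)$ and, arguing exactly as in Proposition \ref{2prop1} (which rests on \cite{vigelis2011musielak}), a sequence $w_n$ converges to $w$ in $\|\cdot\|_{L_G(\Omega)}$ if and only if $\rho_G(w_n-w)\to 0$. Consequently it suffices to produce, for each $u\in L_G(\Omega)$ and each $\delta>0$, a function $\psi\in C_0^\infty(\Omega)$ with $\rho_G(u-\psi)$ arbitrarily small.

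The approximation then proceeds in two steps. \emph{Truncation.} For $u\in L_G(\Omega)=K_G(\Omega)$ set $u_k:=\big(\max\{-k,\min\{k,u\}\}\big)\,\chi_{\Omega\cap B_k(0)}$. Then $u_k\to u$ pointwise a.e.\ and $|u-u_k|\le|u|$, so $G_x(|u-u_k|)\le G_x(|u|)\in L^1(\Omega)$, and the dominated convergence theorem gives $\rho_G(u-u_k)\to 0$. Hence it is enough to treat a bounded function $v$ with $|v|\le M$ and $\operatorname{supp} v\subset K$ for some compact $K\subset\Omega$. \emph{Mollification.} Extend $v$ by zero to $\mathbb{R}^N$ and let $(\rho_\varepsilon)_{\varepsilon>0}$ be a standard mollifier; for $0<\varepsilon<\tfrac12\operatorname{dist}(K,\mathbb{R}^N\setminus\Omega)$ (no constraint when $\Omega=\mathbb{R}^N$) the functions $v_\varepsilon:=\rho_\varepsilon*v$ belong to $C_0^\infty(\Omega)$, satisfy $|v_\varepsilon|\le M$, and have supports contained in one fixed compact set $K_1\subset\Omega$. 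Since $v\in L^1(\mathbb{R}^N)$ we have $v_\varepsilon\to v$ in $L^1$, hence $v_{\varepsilon_j}\to v$ a.e.\ along some sequence $\varepsilon_j\downarrow 0$; then $G_x(|v_{\varepsilon_j}-v|)\le G_x(2M)\,\chi_{K_1}$, which is integrable on $\Omega$ by the local integrability assumption \eqref{eq1.3}, so the dominated convergence theorem yields $\rho_G(v_{\varepsilon_j}-v)\to 0$. Combining the two steps with the modular--norm equivalence and a triangle-inequality/diagonal argument produces the required $\psi\in C_0^\infty(\Omega)$.

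I do not expect a genuinely hard step; the two points that require care are, first, the passage from modular to norm convergence --- this is exactly where the $\Delta_2$-condition is indispensable, and where it is also used implicitly through $L_G(\Omega)=K_G(\Omega)$ --- and, second, the bookkeeping that keeps all mollifications inside a single compact set $K_1$, so that the local integrability hypothesis \eqref{eq1.3} supplies the fixed integrable majorant $G_x(2M)\,\chi_{K_1}$. Without \eqref{eq1.3} even a bounded compactly supported function need not have finite modular, and the scheme would fail already at the first reduction.
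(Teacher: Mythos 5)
The paper itself gives no proof of this proposition; it is simply quoted from \cite{ayya2020someapp}, Remark~2.1, so I evaluate your argument on its own merits.

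Your overall scheme --- pass from norm to modular convergence via the $\Delta_2$-condition, truncate, mollify, and control everything by dominated convergence using the local integrability hypothesis \eqref{eq1.3} --- is the right one, and the mollification step together with the modular--norm bookkeeping is correct as written. The truncation step, however, contains a genuine gap. You set $u_k=\big(\max\{-k,\min\{k,u\}\}\big)\chi_{\Omega\cap B_k(0)}$ and then pass to a bounded function $v$ asserting that $\operatorname{supp} v\subset K$ for some \emph{compact} $K\subset\Omega$. But the support of $u_k$ is only contained in $\overline{\Omega\cap B_k}$, which need not be a compact subset of $\Omega$: if $\Omega$ is bounded (say the unit ball) and $k$ is large, the set where $u_k\neq 0$ can accumulate all the way to $\partial\Omega$. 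In that case no $\varepsilon>0$ keeps $\rho_\varepsilon*u_k$ supported inside $\Omega$, so the mollifications fail to lie in $C_0^\infty(\Omega)$, and the reduction breaks at precisely the step you treated as routine.

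The fix is standard and costs nothing: replace $\chi_{\Omega\cap B_k(0)}$ by $\chi_{\Omega_k}$, where
$$
\Omega_k:=\left\{x\in\Omega:\; |x|<k,\ \operatorname{dist}\bigl(x,\mathbb{R}^N\setminus\Omega\bigr)>\tfrac1k\right\}
$$
(and simply $\Omega_k=B_k$ when $\Omega=\mathbb{R}^N$). Then $\overline{\Omega_k}$ is a compact subset of $\Omega$, $\bigcup_k\Omega_k=\Omega$ so $u_k\to u$ a.e.\ still holds, the domination $|u-u_k|\le|u|$ persists, and the rest of your argument --- mollification inside the fixed compact $K_1$, the majorant $G_x(2M)\chi_{K_1}$ furnished by \eqref{eq1.3}, and the modular-to-norm passage justified by the $\Delta_2$-condition --- goes through unchanged. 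With that amendment the proof is complete.
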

 In the sequel, let $\mathcal{B}_c(\Omega)$ be the set of bounded functions compactly supported in $\Omega$, let $B_{R}$ be the ball with the center $0$ and the radius $R>0$ and let $|B_{R}|$ be the Lebesgue measure
 of the ball $B_{R}$.

The proofs of Theorem \ref{theo3.2} and Theorem \ref{theo3.3} are mainly based on a basic technique of convolution (which makes functions infinitely differentiable), joined with a cut-off (which makes their support compact). In the remainder of this article, we describe properties of these operations with respect to the norm in fractional Musielak-Sobolev spaces.

Let $J$ stand for the Friedrichs mollifier kernel defined on $\mathbb{R}^N$ by
$$
J(x)= \begin{cases}k \mathrm{e}^{-1 /\left(1-|x|^2\right)} & \text { if }|x|<1, \\ 0 & \text { if } |x| \geqslant 1,\end{cases}
$$
where $k>0$ is such that $\displaystyle\int_{\mathbb{R}^{N}} J(x) \mathrm{d} x=\int_{B_{1}} J(x) \mathrm{d} x=1$.\\
 For $\varepsilon>0$, we define $J_{\varepsilon}(x)=\varepsilon^{-N} J\left(x \varepsilon^{-1}\right)$ and for any $u \in W^{s,\Phi_{x,y}}\left(\mathbb{R}^N\right)$, let us denote by $u_{\varepsilon}$ the function defined as the convolution between $u$ and $J_{\varepsilon}$ ; that is,
$$
u_{\varepsilon}(x)=(u*J_{\varepsilon})(x)=\int_{\mathbb{R}^N} J_{\varepsilon}(x-y) u(y) \mathrm{d} y=\int_{B_{1}} u(x-\varepsilon y) J(y) \mathrm{d} y.
$$
Note that, by construction, we have $u_{\varepsilon} \in C^{\infty}\left(\mathbb{R}^N\right)$. 

\begin{lemma}\cite[ Corollary 2.1]{ayya2020someapp}\label{lemmacorollary}  Let $G$ be a generalized N-function satisfying \eqref{eq1.3} and let $u \in \mathcal{B}_c(\Omega)$. For any $\varepsilon>0$ small enough, we have $u_{\varepsilon} \in C_0^{\infty}(\Omega)$. Furthermore,
$$
\left\|u_{\varepsilon}-u\right\|_{L_G(\Omega)} \rightarrow 0 \quad \text { as } \varepsilon \rightarrow 0 .
$$
\end{lemma}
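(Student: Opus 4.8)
The plan is to dispatch the two assertions separately: the smoothness and compact support of $u_\varepsilon$ is entirely classical, while the $L_G$-convergence is the only place where the hypothesis \eqref{eq1.3} genuinely enters, and where I would be most careful.

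First I would record the regularity of $u_\varepsilon$. Since $u\in\mathcal{B}_c(\Omega)$, extend it by $0$ to all of $\mathbb{R}^N$; then $u\in L^1(\mathbb{R}^N)\cap L^\infty(\mathbb{R}^N)$ and $K:=\operatorname{supp}u$ is a compact subset of $\Omega$, so $d:=\operatorname{dist}(K,\mathbb{R}^N\setminus\Omega)>0$. Differentiation under the integral sign in $u_\varepsilon(x)=\int_{\mathbb{R}^N}J_\varepsilon(x-y)u(y)\,dy$ is legitimate because $J_\varepsilon\in C_0^\infty$, which gives $u_\varepsilon\in C^\infty(\mathbb{R}^N)$ with $\partial^\alpha u_\varepsilon=(\partial^\alpha J_\varepsilon)*u$. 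Moreover $J_\varepsilon$ is supported in $\overline{B_\varepsilon}$, so $\operatorname{supp}u_\varepsilon\subseteq K+\overline{B_\varepsilon}$, a compact subset of $\Omega$ as soon as $\varepsilon<d$; hence $u_\varepsilon\in C_0^\infty(\Omega)$ for all such $\varepsilon$.

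Next I would set up the convergence. Because $J\geqslant0$ and $\int J=1$, one has the uniform bound $\|u_\varepsilon\|_\infty\leqslant\|u\|_\infty=:M$ for every $\varepsilon$, and by the classical Lebesgue-point theory for mollifiers $u_\varepsilon\to u$ a.e. on $\mathbb{R}^N$ as $\varepsilon\to0$. Fix $\lambda>0$ and put $A:=K+\overline{B_1}$, a compact subset of $\Omega$ containing $\operatorname{supp}u$ and $\operatorname{supp}u_\varepsilon$ for all $\varepsilon\leqslant1$. For $\varepsilon\leqslant\min\{1,d\}$ the integrand $g_\varepsilon(x):=G\big(x,\lambda|u_\varepsilon(x)-u(x)|\big)$ vanishes off $A$, is dominated on $A$ by $G(x,2\lambda M)$ (monotonicity of $t\mapsto G(x,t)$ together with $|u_\varepsilon-u|\leqslant2M$), and tends to $0$ a.e. (continuity of $t\mapsto G(x,t)$). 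Since $\int_A G(x,2\lambda M)\,dx<\infty$ by \eqref{eq1.3}, the dominated convergence theorem gives $\int_\Omega G\big(x,\lambda|u_\varepsilon-u|\big)\,dx\to0$, and crucially this holds for \emph{every} $\lambda>0$.

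Finally I would pass from the modular to the Luxemburg norm: given $\eta>0$, apply the previous step with $\lambda=1/\eta$ to find $\varepsilon_0>0$ such that $\int_\Omega G\big(x,|u_\varepsilon-u|/\eta\big)\,dx\leqslant1$ for all $\varepsilon<\varepsilon_0$, that is, $\|u_\varepsilon-u\|_{L_G(\Omega)}\leqslant\eta$. The only delicate point is the third paragraph: the local-integrability assumption \eqref{eq1.3} is precisely what furnishes an integrable majorant, and because that step yields the modular estimate for \emph{all} $\lambda$, no $\Delta_2$-condition on $G$ is needed here (in contrast with the modular--norm equivalence of Proposition \ref{2prop1}). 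I do not expect any serious obstacle beyond being careful with the domination and the extension by zero.
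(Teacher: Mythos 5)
Your proof is correct in substance, and it is worth noting that the paper itself does not prove this lemma at all: it is quoted directly from \cite[Corollary 2.1]{ayya2020someapp}, so there is no internal argument to compare against. Your self-contained route is the standard one: smoothness of $u_\varepsilon$ by differentiation under the integral, $\operatorname{supp}u_\varepsilon\subseteq\operatorname{supp}u+\overline{B_\varepsilon}\subset\Omega$ for $\varepsilon$ below $d:=\operatorname{dist}(\operatorname{supp}u,\mathbb{R}^N\setminus\Omega)$, then modular convergence $\int_\Omega G\left(x,\lambda|u_\varepsilon-u|\right)dx\to0$ for \emph{every} $\lambda>0$ via dominated convergence (using $\|u_\varepsilon\|_\infty\le\|u\|_\infty=M$, a.e. convergence of mollifications, and the majorant $G(x,2\lambda M)$ made integrable by \eqref{eq1.3}), and finally the passage from modular to Luxemburg norm, which indeed requires no $\Delta_2$-condition precisely because the modular estimate holds for all $\lambda$ — this correctly matches the hypotheses of the lemma, in contrast with Proposition \ref{2prop1} and Lemma \ref{lemma22}. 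The only slip is your choice $A:=\operatorname{supp}u+\overline{B_1}$: when $d\le 1$ this set need not be contained in $\Omega$, so \eqref{eq1.3} cannot be invoked for it as stated. The repair is immediate: restrict to $\varepsilon<\min\{1,d\}/2$ and take $A:=\operatorname{supp}u+\overline{B_{\min\{1,d\}/2}}$, which is a compact subset of $\Omega$ containing the supports of $u$ and of all the relevant $u_\varepsilon$; the domination and the conclusion then go through unchanged.
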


\begin{lemma}\cite[ Lemma B.1]{ayya2020someapp}\label{lemma22}
Let $G$ be a generalized N-function satisfying \eqref{eq1.3} and the $\Delta_{2}$-condition. Then
$\mathcal{B}_c(\Omega)$ is dense in $(L_G(\Omega), \|u\|_{L_G(\Omega)})$.
\end{lemma}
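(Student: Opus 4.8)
The plan is to approximate an arbitrary $u\in L_G(\Omega)$ by functions obtained from $u$ by truncating simultaneously its amplitude and its domain, controlling the error through the dominated convergence theorem together with the $\Delta_2$-condition. Before constructing the approximants I would record two preliminary facts. First, by the local integrability assumption \eqref{eq1.3}, every bounded function supported in a compact subset of $\Omega$ belongs to $L_G(\Omega)$, so that $\mathcal{B}_c(\Omega)\subseteq L_G(\Omega)$ and the statement is meaningful. Second, iterating the $\Delta_2$-condition $G(x,2t)\leqslant C\,G(x,t)$ gives $G(x,2^{m}t)\leqslant C^{m}G(x,t)$ for every $m\in\mathbb{N}$; since $u\in L_G(\Omega)$ provides $\lambda_0>0$ with $\int_\Omega G(x,\lambda_0|u(x)|)\,\mathrm dx<\infty$, it follows that $\int_\Omega G(x,\lambda|u(x)|)\,\mathrm dx<\infty$ for \emph{every} $\lambda>0$, i.e.\ $G(\cdot,\lambda|u|)\in L^1(\Omega)$ for all $\lambda>0$.

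Next I would build the approximating sequence. Write $T_n u:=\operatorname{sgn}(u)\min\{|u|,n\}$ for the amplitude truncation, set
$$
\Omega_n:=\bigl\{x\in\Omega:\ |x|<n\ \text{ and }\ \operatorname{dist}(x,\mathbb{R}^N\setminus\Omega)>1/n\bigr\}
$$
(the second condition being vacuous when $\Omega=\mathbb{R}^N$), and define $v_n:=(T_n u)\,\mathbf{1}_{\Omega_n}$. Then $|v_n|\leqslant n$ and $\operatorname{supp}v_n\subseteq\overline{\Omega_n}$, which is a compact subset of $\Omega$, so $v_n\in\mathcal{B}_c(\Omega)$. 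Since $\Omega$ is open one has $\Omega_n\uparrow\Omega$, hence for a.e.\ $x$ there is $n_0$ with $x\in\Omega_n$ and $|u(x)|\leqslant n$ for all $n\geqslant n_0$, which forces $v_n(x)=u(x)$ for $n\geqslant n_0$; thus $v_n\to u$ a.e. Moreover $|v_n-u|\leqslant|u|$ pointwise: on $\Omega_n$ because $|T_n u-u|=(|u|-n)^{+}\leqslant|u|$, and off $\Omega_n$ because $v_n=0$ there.

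It then remains to estimate the Luxemburg norm of $v_n-u$. Fix $\varepsilon>0$. By monotonicity of $G(x,\cdot)$ we have $G(x,\varepsilon^{-1}|v_n(x)-u(x)|)\leqslant G(x,\varepsilon^{-1}|u(x)|)$, and the right-hand side lies in $L^1(\Omega)$ by the second preliminary fact; since also $G(x,\varepsilon^{-1}|v_n(x)-u(x)|)\to 0$ for a.e.\ $x$, the dominated convergence theorem yields $\int_\Omega G(x,\varepsilon^{-1}|v_n(x)-u(x)|)\,\mathrm dx\to 0$, so this integral is $\leqslant 1$ for all $n$ large, whence $\|v_n-u\|_{L_G(\Omega)}\leqslant\varepsilon$ for such $n$. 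Letting $\varepsilon\to 0$ gives $\|v_n-u\|_{L_G(\Omega)}\to 0$, which proves the density.

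There is no deep obstacle here; the only place the hypotheses are genuinely used is the upgrade from ``$\int_\Omega G(x,\lambda_0|u|)\,\mathrm dx<\infty$ for some $\lambda_0$'' to ``for every $\lambda$'' — this is exactly where the $\Delta_2$-condition is needed, since otherwise the dominating function $G(\cdot,\varepsilon^{-1}|u|)$ in the last step need not be integrable — while \eqref{eq1.3} is what guarantees that the approximants $v_n$ belong to $L_G(\Omega)$ at all. The one technical point requiring a little care is that $\operatorname{supp}v_n$ must be a \emph{compact subset of} $\Omega$, which is why $\Omega_n$ is defined with the distance-to-the-complement condition rather than by merely intersecting with a ball.
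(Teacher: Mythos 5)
Your proof is correct, and it is the standard argument that the cited reference [Youssfi--Ahmida, Lemma~B.1] uses; the paper itself does not reprove the lemma but simply cites it. The decomposition into amplitude truncation plus domain truncation, the verification that $\overline{\Omega_n}$ is a compact subset of $\Omega$ via the distance-to-complement condition, and the use of dominated convergence with the $\Delta_2$-condition supplying an integrable majorant are exactly the expected ingredients. One minor streamlining available to you given the paper's toolkit: under $\Delta_2$ one has $L_G(\Omega)=K_G(\Omega)$, so $\int_\Omega G(x,|u(x)|)\,\mathrm dx<\infty$ directly, and then dominated convergence (with majorant $G(\cdot,|u|)$ and the bound $|v_n-u|\leqslant|u|$) gives $\int_\Omega G(x,|v_n-u|)\,\mathrm dx\to 0$; an appeal to the modular-to-norm equivalence already recorded in Proposition~\ref{2prop1} then finishes the proof, avoiding both the ``for every $\lambda$'' upgrade and the explicit $\varepsilon^{-1}$-scaling step. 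Your version essentially re-proves that implication inline, which is harmless but slightly longer.
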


\begin{lemma}\label{2lem1} Let $u \in C_0^{\infty}\left(\mathbb{R}^N\right)$. Then there exists a positive constant $C$ such that
$$
\int_{\mathbb{R}^N} \int_{\mathbb{R}^N} \Phi_{x,y}\left( |u(x)-u(y)|K(x,y)\right)d x d y\leq C .
$$
\end{lemma}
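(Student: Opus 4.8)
The plan is to split the double integral over $\mathbb{R}^N \times \mathbb{R}^N$ according to whether $|x-y|$ is small or large, say into the region $D = \{|x-y| < 1\}$ and its complement $D^c = \{|x-y| \geq 1\}$. On $D$ we exploit the smoothness of $u$: since $u \in C_0^\infty(\mathbb{R}^N)$, it is Lipschitz, so $|u(x)-u(y)| \leq L|x-y|$ for some $L>0$, while on $D^c$ we use that $u$ is bounded, so $|u(x)-u(y)| \leq 2\|u\|_\infty$. In both regions the convexity of $\Phi_{x,y}$ together with the growth estimates from Lemma \ref{lemmapro} (in particular $\Phi_{x,y}(\sigma t) \leq \sigma^{\varphi^+}\Phi_{x,y}(t)$ for $\sigma \geq 1$ and $\Phi_{x,y}(\sigma t) \leq \sigma^{\varphi^-}\Phi_{x,y}(t)$ for $\sigma \leq 1$) will let me pull the scalar factors out of $\Phi_{x,y}$ and reduce everything to integrals of $K(x,y)$ and of $\Phi_{x,y}$ evaluated at $\delta K(x,y)$, where $\delta = \min\{1,|x-y|\}$.

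First I would treat $D^c$. Here $\delta = 1$, so the hypothesis (a), namely $\delta K(x,y) \in L_{\Phi_{x,y}}(\Omega\times\Omega)$, gives $\int_{D^c} \Phi_{x,y}(\lambda_0 K(x,y))\,dx\,dy < \infty$ for some $\lambda_0 > 0$; restricting to the compact set $(\operatorname{supp} u \cup (\operatorname{supp} u + B_1)) \times (\cdots)$ and using that $|u(x)-u(y)|$ vanishes unless $x$ or $y$ lies in $\operatorname{supp} u$, I bound $|u(x)-u(y)|K(x,y)$ by a constant multiple of $K(x,y)$ and, after rescaling by $\lambda_0$ and using the $\Delta_2$-type estimates of Lemma \ref{lemmapro}, conclude finiteness of the integral over $D^c$. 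For the region $D$, I write $|u(x)-u(y)|K(x,y) \leq L|x-y|K(x,y) = L\,\delta(x,y)K(x,y)$ (since $|x-y|=\delta$ on $D$), and again by Lemma \ref{lemmapro} I reduce to $\int_{D} \Phi_{x,y}(\delta K(x,y))\,dx\,dy$, which — once localized to the relevant compact set using the compact support of $u$, together with the fact that far-away contributions where $u$ vanishes cost nothing — is controlled by hypothesis (a). One must also absorb the part of $D$ where both $x$ and $y$ are outside a large ball, but there $u \equiv 0$ so the integrand is zero.

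The main obstacle is the bookkeeping needed to handle the interaction between the compact support of $u$ and the global integrability afforded by hypothesis (a): one has to argue carefully that $\int_{\mathbb{R}^N\times\mathbb{R}^N}$ against the kernel $K$ and the function $\Phi_{x,y}(\delta K)$ reduces to an integral over a compact subset of $\Omega\times\Omega$ (so that (a) and (b) apply), using that $|u(x)-u(y)|=0$ whenever $x,y \notin \operatorname{supp} u$, and that $K(x,y)$ itself need not be globally integrable without the $\delta$ weight. A secondary technical point is verifying that the rescaling constants produced by Lemma \ref{lemmapro} interact correctly with the (possibly) different $\lambda$ appearing in the definition of $\delta K \in L_{\Phi_{x,y}}$; this is handled by a further application of the homogeneity-type bounds in Lemma \ref{lemmapro}, at the cost of an extra multiplicative constant $C$, which is exactly what the statement allows.
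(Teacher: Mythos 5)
Your approach is essentially the paper's: both arguments bound $|u(x)-u(y)|$ by a constant multiple of $\min\{1,|x-y|\} = \delta(x,y)$ using the Lipschitz and $L^\infty$ bounds on $u$ (your $D$/$D^c$ split simply spells out the two cases of the minimum), then pass the constant through $\Phi_{x,y}$ via Lemma \ref{lemmapro} and invoke hypothesis (a). The one thing to streamline is that the ``main obstacle'' you identify is a non-issue: hypothesis (a) with $\Omega=\mathbb{R}^N$ already gives $\delta K\in L_{\Phi_{x,y}}(\mathbb{R}^N\times\mathbb{R}^N)$ globally, and since $\Phi_{x,y}\in\Delta_2$ this yields $\int_{\mathbb{R}^N\times\mathbb{R}^N}\Phi_{x,y}(\delta K)\,dx\,dy<\infty$ directly, so no localization to compact sets and no appeal to hypothesis (b) are needed --- compact support of $u$ is used solely to obtain the Lipschitz and sup-norm bounds.
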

\begin{proof} Let $u \in C_0^{\infty}\left(\mathbb{R}^N\right)$ with $\operatorname{supp} u \subseteq B_R$. Therefore, $u$ vanishes outside $B_R$. Thus 
$$
\begin{aligned}
 &\int_{\mathbb{R}^N} \int_{\mathbb{R}^N} \Phi_{x,y}\left(|u(x)-u(y)|K(x,y)\right)d x d y \\
& =\int_{B_R} \int_{B_R} \Phi_{x,y}\left(|u(x)-u(y)|K(x,y)\right)d x d y\\
 &+2\int_{B_R} \int_{\mathbb{R}^{N}\backslash B_{R}} \Phi_{x,y}\left(|u(x)-u(y)|K(x,y)\right)d x d y \\
&\leq 2\int_{B_R} \int_{\mathbb{R}^{N}} \Phi_{x,y}\left(|u(x)-u(y)|K(x,y)\right)d x d y
\end{aligned}
$$
Now, we have
$$
|u(x)-u(y)| \leqslant|| \nabla u \|_{L^{\infty}\left(\mathbb{R}^N\right)}|x-y| \text { and }|u(x)-u(y)| \leqslant 2\|u\|_{L^{\infty}\left(\mathbb{R}^N\right)} .
$$
Then
$$
|u(x)-u(y)| \leqslant 2\|u\|_{C^1\left(\mathbb{R}^N\right)} \min \{1,|x-y|\}:=\alpha \delta(x, y).
$$
with $\alpha=2\|u\|_{C^1\left(\mathbb{R}^N\right)}$. Since $\delta K \in L_{\Phi_{x,y}}\left(\mathbb{R}^N \times \mathbb{R}^N\right)$, then
$$
\begin{aligned}
&\int_{\mathbb{R}^N} \int_{\mathbb{R}^N} \Phi_{x,y}\left(|u(x)-u(y)|K(x,y)\right)d x d y\\
&\leq \int_{\mathbb{R}^N} \int_{\mathbb{R}^N} \Phi_{x,y}\left(\alpha \delta(x, y)K(x,y)\right)d x d y\\
&\leq C \int_{\mathbb{R}^N} \int_{\mathbb{R}^N} \Phi_{x,y}\left( \delta(x, y)K(x,y)\right)d x d y < \infty.
\end{aligned}
$$
where the constant $C$ depends on $\alpha$, $\varphi^{+}$ and $\varphi^{-}$. The lemma is proved.
\end{proof}
Note that by Proposition \ref{2prop2} and Lemma \ref{2lem1}, we have $C^{\infty}_{0}(\mathbb{R}^{N})\subseteq W^{s,\Phi_{x,y}}(\mathbb{R}^{N}) $.
\begin{lemma}\label{2lem2}
 Let $u \in L_{\widehat{\Phi}_{x}}\left(\mathbb{R}^N\right)$. Then there exists a sequence of functions $u_m \in$ $L_{\widehat{\Phi}_{x}}\left(\mathbb{R}^N\right) \cap L^{\infty}\left(\mathbb{R}^N\right)$ such that
$$
\left\|u-u_m\right\|_{L_{\widehat{\Phi}_{x}}\left(\mathbb{R}^N\right)} \longrightarrow 0 \text { as } m \longrightarrow+\infty .
$$
\end{lemma}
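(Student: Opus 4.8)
The plan is a straightforward truncation argument, combined with the dominated convergence theorem and the modular--norm equivalence from Proposition~\ref{2prop1}. For each $m\in\mathbb{N}$ I would set
$$
u_m(x)=\begin{cases} u(x) & \text{if } |u(x)|\leqslant m,\\[1mm] m\,\dfrac{u(x)}{|u(x)|} & \text{if } |u(x)|> m,\end{cases}
$$
so that $u_m$ is measurable, $\|u_m\|_{L^\infty(\mathbb{R}^N)}\leqslant m$, and $|u_m(x)|\leqslant |u(x)|$ for every $x$. Since $\widehat{\Phi}_x\in\Delta_2$, we have $L_{\widehat{\Phi}_x}(\mathbb{R}^N)=K_{\widehat{\Phi}_x}(\mathbb{R}^N)$ (the identity recorded just after \eqref{1.6eq2}), hence $\int_{\mathbb{R}^N}\widehat{\Phi}_x(|u(x)|)\,dx<\infty$; because $t\mapsto\widehat{\Phi}_x(t)$ is nondecreasing, $\int_{\mathbb{R}^N}\widehat{\Phi}_x(|u_m(x)|)\,dx\leqslant \int_{\mathbb{R}^N}\widehat{\Phi}_x(|u(x)|)\,dx<\infty$, so that $u_m\in K_{\widehat{\Phi}_x}(\mathbb{R}^N)=L_{\widehat{\Phi}_x}(\mathbb{R}^N)$, i.e. $u_m\in L_{\widehat{\Phi}_x}(\mathbb{R}^N)\cap L^\infty(\mathbb{R}^N)$ as required.

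Next I would estimate the error. By construction $|u(x)-u_m(x)|=\max\{|u(x)|-m,0\}$; in particular $|u(x)-u_m(x)|\leqslant |u(x)|$ for all $x$, and $u(x)-u_m(x)=0$ whenever $|u(x)|\leqslant m$. Using again that $\widehat{\Phi}_x$ is nondecreasing with $\widehat{\Phi}_x(0)=0$, this yields the pointwise bound
$$
\widehat{\Phi}_x\big(|u(x)-u_m(x)|\big)\leqslant \widehat{\Phi}_x\big(|u(x)|\big)\,\mathbf{1}_{\{|u|>m\}}(x)\leqslant \widehat{\Phi}_x\big(|u(x)|\big),
$$
and the right-hand side lies in $L^1(\mathbb{R}^N)$ by the previous paragraph. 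For a.e.\ $x$ one has $|u(x)|<\infty$, so $\widehat{\Phi}_x(|u(x)-u_m(x)|)\to 0$ as $m\to\infty$; Lebesgue's dominated convergence theorem then gives $\int_{\mathbb{R}^N}\widehat{\Phi}_x(|u-u_m|)\,dx\to 0$.

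Finally, I would invoke Proposition~\ref{2prop1} with $\Omega=\mathbb{R}^N$: the modular convergence $\int_{\mathbb{R}^N}\widehat{\Phi}_x(|u-u_m|)\,dx\to 0$ is equivalent to $\|u-u_m\|_{L_{\widehat{\Phi}_x}(\mathbb{R}^N)}\to 0$, which is exactly the assertion of the lemma. There is no serious obstacle here; the only points demanding a little care are the use of the identity $L_{\widehat{\Phi}_x}=K_{\widehat{\Phi}_x}$ (coming from the $\Delta_2$-condition) to ensure that $\widehat{\Phi}_x(|u|)$ is genuinely integrable, so that it can serve as a dominating function, and the correct passage from modular to norm convergence via Proposition~\ref{2prop1}; everything else is routine.
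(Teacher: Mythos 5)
Your proof is correct and follows essentially the same route as the paper: truncate $u$ at level $m$, dominate the modular of the error by $\widehat{\Phi}_x(|u|)\in L^1$, apply dominated convergence, and pass from modular to norm convergence via Proposition~\ref{2prop1}. If anything, you are a little more careful than the paper — you explicitly invoke $L_{\widehat{\Phi}_x}=K_{\widehat{\Phi}_x}$ (from the $\Delta_2$-condition) to justify $\widehat{\Phi}_x(|u|)\in L^1$, and you dominate $\widehat{\Phi}_x(|u-u_m|)$ directly rather than $\widehat{\Phi}_x(|u_m|)$ as the paper writes.
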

\begin{proof}
 We set
$$
u_m(x):= \begin{cases}m & \text { if } u(x) \geq m, \\ u(x) & \text { if } u(x) \in(-m, m), \\ -m & \text { if } u(x) \leq-m .\end{cases}
$$
We have
$$
u_m \longrightarrow u \text { a.e. in } \mathbb{R}^N
$$
and
$$
\widehat{\Phi}_{x}\left( \left|u_m(x)\right|\right)  \leq \widehat{\Phi}_{x}\left( |u(x)|\right) \in L^1\left(\mathbb{R}^N\right) .
$$
Then by the dominated convergence theorem and Proposition \ref{2prop1}, the proof follows.
\end{proof}
\begin{lemma}\label{2lem3}
 Let $u \in L_{\Phi_{x,y}}\left(\mathbb{R}^N \times \mathbb{R}^N\right)$. Then there exists a sequence of functions $u_M \in L_{\Phi_{x,y}}\left(\mathbb{R}^N \times \mathbb{R}^N\right) \cap L^{\infty}\left(\mathbb{R}^N \times \mathbb{R}^N\right)$ such that
$$
\left\|u-u_M\right\|_{L_{\Phi_{x,y}}\left(\mathbb{R}^N \times \mathbb{R}^N\right)} \longrightarrow 0 \quad \text { as } M \longrightarrow+\infty \text {. }
$$
\end{lemma}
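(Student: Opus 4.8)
\emph{Proof proposal.} The plan is to repeat the argument of Lemma~\ref{2lem2} almost verbatim, with the truncation now performed on the product space $\mathbb{R}^N\times\mathbb{R}^N$. For each $M\in\mathbb{N}$ I would set
$$
u_M(x,y):=\begin{cases} M & \text{if } u(x,y)\geq M,\\ u(x,y) & \text{if } -M<u(x,y)<M,\\ -M & \text{if } u(x,y)\leq -M,\end{cases}
$$
so that $u_M\in L^\infty(\mathbb{R}^N\times\mathbb{R}^N)$, $|u_M(x,y)|\leq |u(x,y)|$ and $|u(x,y)-u_M(x,y)|=(|u(x,y)|-M)^+\leq |u(x,y)|$ for a.e.\ $(x,y)$; in particular $u_M\in L_{\Phi_{x,y}}(\mathbb{R}^N\times\mathbb{R}^N)$, and $u_M\to u$ almost everywhere on $\mathbb{R}^N\times\mathbb{R}^N$.

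Next I would pass from almost everywhere convergence to modular convergence by dominated convergence. Since $\Phi_{x,y}$ satisfies the $\Delta_2$-condition coming from \eqref{eq1.5}, the same iteration of the $\Delta_2$ inequality used for $\widehat{\Phi}_x$ gives $L_{\Phi_{x,y}}(\mathbb{R}^N\times\mathbb{R}^N)=K_{\Phi_{x,y}}(\mathbb{R}^N\times\mathbb{R}^N)$, hence $\Phi_{x,y}(|u(x,y)|)\in L^1(\mathbb{R}^N\times\mathbb{R}^N)$. As $t\mapsto\Phi_{x,y}(t)$ is nondecreasing, continuous and vanishes at $0$, the bound $|u-u_M|\leq|u|$ yields $\Phi_{x,y}(|u-u_M|)\leq\Phi_{x,y}(|u|)\in L^1$ together with $\Phi_{x,y}(|u-u_M|)\to 0$ a.e.; the dominated convergence theorem then gives
$$
\int_{\mathbb{R}^N}\int_{\mathbb{R}^N}\Phi_{x,y}\bigl(|u(x,y)-u_M(x,y)|\bigr)\,dx\,dy\longrightarrow 0\qquad\text{as }M\to+\infty.
$$

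Finally I would upgrade this modular convergence to norm convergence. Since $\Phi_{x,y}$ is a generalized $N$-function on the measure space $\mathbb{R}^N\times\mathbb{R}^N$ satisfying $\Delta_2$, the equivalence between the two statements of Proposition~\ref{2prop1} remains valid with $(\Omega,\widehat{\Phi}_x)$ replaced by $(\mathbb{R}^N\times\mathbb{R}^N,\Phi_{x,y})$ — this is again \cite[Theorem~3.35]{vigelis2011musielak}. Applied to the sequence $u-u_M$ it turns the modular convergence above into $\|u-u_M\|_{L_{\Phi_{x,y}}(\mathbb{R}^N\times\mathbb{R}^N)}\to 0$, which is the claim. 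There is no genuine obstacle in this argument; the only points needing a word of justification are that the identity $L_{\Phi_{x,y}}=K_{\Phi_{x,y}}$ and the modular/norm equivalence persist on the product space, and both are immediate because the hypothesis \eqref{eq1.5} on $\Phi_{x,y}$ is uniform in $(x,y)$ and therefore transfers to $\mathbb{R}^N\times\mathbb{R}^N$.
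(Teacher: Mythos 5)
Your proposal is correct and follows essentially the same route as the paper: truncation of $u$ at height $M$, the pointwise bound $\Phi_{x,y}(|u-u_M|)\leq\Phi_{x,y}(|u|)\in L^1$, the dominated convergence theorem, and then the modular-to-norm equivalence of Proposition~\ref{2prop1} applied on the product space. The extra remarks you add (that $\Delta_2$ gives $L_{\Phi_{x,y}}=K_{\Phi_{x,y}}$ and that the equivalence transfers to $\mathbb{R}^N\times\mathbb{R}^N$) are points the paper uses implicitly, so they only make the argument more complete.
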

\begin{proof}
 We set
$$
u_M(x, y):= \begin{cases}M & \text { if } u(x, y) \geq M \\ u(x, y) & \text { if } u(x, y) \in(-M, M), \\ -M & \text { if } u(x, y) \leq-M.\end{cases}
$$
We have
$$
u_M \longrightarrow u \text { a.e. in } \mathbb{R}^N \times \mathbb{R}^N
$$
and
$$
\Phi_{x,y}\left(\left|u_M(x, y)\right|\right) \leq \Phi_{x,y}\left(|u(x, y)|\right) \in L^1\left(\mathbb{R}^N \times \mathbb{R}^N\right) .
$$
Then the result follows from the dominated convergence theorem and Proposition \ref{2prop1}.
\end{proof}
For small $\varepsilon$, convolutions do not change too much the norm \eqref{Norm}, according to the following result :
\begin{lemma}\label{lem2.06} Let $u \in W^{s,\Phi_{x,y}}\left(\mathbb{R}^N\right)$. We suppose that \eqref{eq1.3}, \eqref{eq1.4} and \eqref{eq1.5} hold. Then $\left\|u-u_{\varepsilon}\right\|_{W^{s,\Phi_{x,y}}\left(\mathbb{R}^N\right)} \rightarrow 0$ as $\varepsilon \rightarrow 0$.
\end{lemma}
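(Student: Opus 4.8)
The plan is to estimate the two pieces of the norm \eqref{Norm} separately and show that each tends to $0$: the Luxemburg part $\|u-u_\varepsilon\|_{L_{\widehat\Phi_x}(\mathbb R^N)}$ and the Gagliardo seminorm $[u-u_\varepsilon]_{s,\Phi_{x,y}}$. The common engine will be a Jensen‑type inequality coupled with the translation invariance \eqref{eq1.4}. First, since $J\ge 0$, $\int_{B_1}J=1$ and $t\mapsto\widehat\Phi_x(t)$ is convex, Jensen's inequality gives, for any $v\in L_{\widehat\Phi_x}(\mathbb R^N)$ and $\lambda>0$,
$$
\widehat\Phi_x\!\left(\frac{|v_\varepsilon(x)|}{\lambda}\right)\le\int_{B_1}\widehat\Phi_x\!\left(\frac{|v(x-\varepsilon\eta)|}{\lambda}\right)J(\eta)\,d\eta ;
$$
integrating in $x$, using Fubini and the substitution $x\mapsto x-\varepsilon\eta$, and invoking $\widehat\Phi_{x+\varepsilon\eta}=\widehat\Phi_x$ (which is \eqref{eq1.4} restricted to the diagonal), I would conclude $\|v_\varepsilon\|_{L_{\widehat\Phi_x}(\mathbb R^N)}\le\|v\|_{L_{\widehat\Phi_x}(\mathbb R^N)}$, i.e. mollification is a contraction on $L_{\widehat\Phi_x}(\mathbb R^N)$ uniformly in $\varepsilon$. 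Then a standard three‑$\varepsilon$ argument finishes this part: given $\delta>0$ pick $v\in\mathcal B_c(\mathbb R^N)$ with $\|u-v\|_{L_{\widehat\Phi_x}(\mathbb R^N)}<\delta$ (Lemma \ref{lemma22}), write $u-u_\varepsilon=(u-v)+(v-v_\varepsilon)+(v-u)_\varepsilon$, bound the first and third terms by $\delta$ using the contraction, and send $\varepsilon\to 0$ in the middle term via Lemma \ref{lemmacorollary}.

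For the seminorm, I would set $U(x,y)=|u(x)-u(y)|K(x,y)$, so that $[u]_{s,\Phi_{x,y}}=\|U\|_{L_{\Phi_{x,y}}(\mathbb R^N\times\mathbb R^N)}$, and note that $U\in L_{\Phi_{x,y}}(\mathbb R^N\times\mathbb R^N)$ since $u\in W^{s,\Phi_{x,y}}(\mathbb R^N)$ and $\Phi_{x,y}\in\Delta_2$. By \eqref{eq1.4}, both $\Phi_{x,y}$ and $K(x,y)=1/\big(|x-y|^s\Phi_{x,y}^{-1}(|x-y|^N)\big)$ depend on $(x,y)$ only through $x-y$; in particular $K(x,y)=K(x-\varepsilon\eta,y-\varepsilon\eta)$. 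Introducing the diagonal averaging operator $(T_\varepsilon F)(x,y)=\int_{B_1}F(x-\varepsilon\eta,y-\varepsilon\eta)J(\eta)\,d\eta$ and using $u_\varepsilon(x)-u_\varepsilon(y)=\int_{B_1}[u(x-\varepsilon\eta)-u(y-\varepsilon\eta)]J(\eta)\,d\eta$, I would bring the factor $K(x,y)=K(x-\varepsilon\eta,y-\varepsilon\eta)$ under the integral to obtain the key identity
$$
\big|(u-u_\varepsilon)(x)-(u-u_\varepsilon)(y)\big|\,K(x,y)=\big|U(x,y)-(T_\varepsilon U)(x,y)\big| ,
$$
whence $[u-u_\varepsilon]_{s,\Phi_{x,y}}=\|U-T_\varepsilon U\|_{L_{\Phi_{x,y}}(\mathbb R^N\times\mathbb R^N)}$. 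The same Jensen/Fubini/change‑of‑variables computation as above — now with \eqref{eq1.4} used to absorb the shift $(x,y)\mapsto(x+\varepsilon\eta,y+\varepsilon\eta)$ into $\Phi_{x,y}$ — shows that $T_\varepsilon$ is a contraction on $L_{\Phi_{x,y}}(\mathbb R^N\times\mathbb R^N)$ uniformly in $\varepsilon$; together with the pointwise bound $|u_\varepsilon(x)-u_\varepsilon(y)|K(x,y)\le(T_\varepsilon U)(x,y)$ this also certifies $u_\varepsilon\in W^{s,\Phi_{x,y}}(\mathbb R^N)$, so the quantity in the statement is well defined.

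It then remains to prove $\|U-T_\varepsilon U\|_{L_{\Phi_{x,y}}(\mathbb R^N\times\mathbb R^N)}\to 0$. Since $\Phi_{x,y}\in\Delta_2$ and satisfies \eqref{eq1.3} on $\mathbb R^N\times\mathbb R^N$, I would use that norm convergence is equivalent to the modular convergence $\int_{\mathbb R^N}\int_{\mathbb R^N}\Phi_{x,y}(|U-T_\varepsilon U|)\,dx\,dy\to 0$ (the product‑space version of Proposition \ref{2prop1}) and that $C_0^\infty(\mathbb R^N\times\mathbb R^N)$ is dense in $L_{\Phi_{x,y}}(\mathbb R^N\times\mathbb R^N)$ (Proposition \ref{2prop2}). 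For $F\in C_0^\infty(\mathbb R^N\times\mathbb R^N)$ with $\operatorname{supp}F\subset A\times A$, the functions $T_\varepsilon F$ stay supported in a fixed compact set for $\varepsilon\le 1$ and $\|F-T_\varepsilon F\|_\infty\to 0$ by uniform continuity, so $\Phi_{x,y}(|F-T_\varepsilon F|)\le\Phi_{x,y}(\|F-T_\varepsilon F\|_\infty)\to 0$ pointwise and is dominated by $\Phi_{x,y}(1)\mathbf{1}_{A\times A}\in L^1$ for small $\varepsilon$, giving $\|F-T_\varepsilon F\|_{L_{\Phi_{x,y}}}\to 0$ by dominated convergence. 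A final three‑$\varepsilon$ argument, using the density of such $F$ and the uniform contractivity of $T_\varepsilon$, upgrades this to $\|U-T_\varepsilon U\|_{L_{\Phi_{x,y}}}\to 0$, which combined with the first part completes the proof.

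The step I expect to be the main obstacle is the correct handling of \eqref{eq1.4}, which must be used twice and in an essential way: once to write $K(x,y)=K(x-\varepsilon\eta,y-\varepsilon\eta)$ so that $K$ can be pulled inside the diagonal average (this is what turns the unwieldy $|(u-u_\varepsilon)(x)-(u-u_\varepsilon)(y)|K(x,y)$ into $|U-T_\varepsilon U|$), and once to absorb the translation produced by the change of variables back into the $(x,y)$‑dependence of $\Phi_{x,y}$, which is precisely what makes the Jensen estimates close up as honest contractions rather than as inequalities with an $\varepsilon$‑dependent constant. Beyond this, the argument is the classical mollification scheme transplanted to the modular setting via Propositions \ref{2prop1}–\ref{2prop2} and Lemmas \ref{lemmacorollary}–\ref{lemma22}.
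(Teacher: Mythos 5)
Your argument is correct in substance and rests on the same pillars as the paper's proof (Jensen's inequality for the probability measure $J\,d\eta$, the diagonal translation invariance \eqref{eq1.4} applied both to $K$ and to $\Phi_{x,y}$, density of $C_0^{\infty}$ in $L_{\Phi_{x,y}}(\mathbb{R}^N\times\mathbb{R}^N)$ from Proposition \ref{2prop2}, and Lemmas \ref{lemmacorollary}--\ref{lemma22} for the $L_{\widehat{\Phi}_x}$ piece), but the packaging is genuinely different and arguably cleaner. The paper estimates the modular pointwise, which forces it to move $J(z)$ inside $\Phi_{x,y}$ and pay the price of the $J(z)^{\varphi^{+}}+J(z)^{\varphi^{-}}$ factors via Lemma \ref{lemmapro}, then proves continuity of the diagonal translation $v\mapsto v(\cdot-\varepsilon w)$ in $L_{\Phi_{x,y}}$ for each fixed $z\in B_1$ and closes with dominated convergence in $z$; you instead prove once and for all that the diagonal averaging operator $T_\varepsilon$ is a contraction of $L_{\Phi_{x,y}}(\mathbb{R}^N\times\mathbb{R}^N)$ uniformly in $\varepsilon$, and then a single density/three-$\varepsilon$ argument at the norm level replaces both the $J^{\varphi^{\pm}}$ bookkeeping and the dominated convergence step. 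Similarly, for the $L_{\widehat{\Phi}_x}$ part your explicit contraction-plus-density argument makes rigorous what the paper compresses into ``we can assume $u$ bounded and compactly supported.'' One small repair: with $U(x,y)=|u(x)-u(y)|K(x,y)$ (absolute value inside) the claimed identity $\bigl|(u-u_\varepsilon)(x)-(u-u_\varepsilon)(y)\bigr|K(x,y)=|U-T_\varepsilon U|$ is false in general, since $u_\varepsilon(x)-u_\varepsilon(y)$ averages the \emph{signed} differences; define instead $V(x,y)=(u(x)-u(y))K(x,y)$ (as the paper does with its $v$), for which the identity $[(u-u_\varepsilon)(x)-(u-u_\varepsilon)(y)]K(x,y)=V-T_\varepsilon V$ holds exactly thanks to $K(x,y)=K(x-\varepsilon\eta,y-\varepsilon\eta)$, and note that the Luxemburg norm only sees $|V-T_\varepsilon V|$; keep the absolute-value version only for the pointwise bound certifying $u_\varepsilon\in W^{s,\Phi_{x,y}}(\mathbb{R}^N)$. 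With that cosmetic change the proof is complete.
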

\begin{proof}
 Let $u \in W^{s,\Phi_{x,y}}\left(\mathbb{R}^N\right)$. Since $u \in L_{\widehat{\Phi}_{x}}(\mathbb{R}^{N})$,
then according to Lemma \ref{lemma22}, we can assume that $u$ is bounded and compactly supported in $\mathbb{R}^{N}$. So that by Lemma \ref{lemmacorollary}, we have
\begin{equation}\label{eq2.01}
\left\|u-u_{\varepsilon}\right\|_{L_{\widehat{\Phi}_{x}}(\mathbb{R}^{N})} \rightarrow 0 \quad \text { as } \varepsilon \rightarrow 0 .
\end{equation}
Hence, by Proposition \ref{2prop1}, it suffices to prove that

\begin{equation}\label{eq2.1}
\int_{\mathbb{R}^{N} \times \mathbb{R}^{N}}\Phi_{x,y}\left(|u_\varepsilon(x)-u(x)-u_\varepsilon(y)+u(y)|K(x,y)\right)  d x d y \longrightarrow 0,
\end{equation}
as $\varepsilon \longrightarrow 0$. Using the definition of $u_{\varepsilon}$, the Jensen’s inequality, Tonelli's and Fubini's theorems, we get
\begin{equation}\label{eq2.007}
\begin{aligned}
& \int_{\mathbb{R}^{N} \times \mathbb{R}^{N}}\Phi_{x,y}\left(|u_\varepsilon(x)-u(x)-u_\varepsilon(y)+u(y)|K(x,y)\right)  d x d y  \\
& =\int_{\mathbb{R}^{N} \times \mathbb{R}^{N}} \Phi_{x,y}\left(\left[\int_{\mathbb{R}^N}(u(x-z)-u(y-z)) J_\varepsilon(z) d z-u(x)+u(y)\right]K(x,y)\right)d x d y \\
& =\int_{\mathbb{R}^{N} \times \mathbb{R}^{N}} \Phi_{x,y}\left(\left[\int_{B_1}(u(x-\varepsilon z)-u(y-\varepsilon z)-u(x)+u(y))K(x,y) J(z) d z\right]\right)d x d y\\
&\leq \int_{\mathbb{R}^{N} \times \mathbb{R}^{N}}\left[\int_{B_1}\Phi_{x,y}\left( |u(x-\varepsilon z)-u(y-\varepsilon z)-u(x)+u(y)|K(x,y) J(z)\right) d z\right] d x d y \\
& \leq \int_{\mathbb{R}^{N} \times \mathbb{R}^{N} \times B_1}\Phi_{x,y}\left( |u(x-\varepsilon z)-u(y-\varepsilon z)-u(x)+u(y)|K(x,y)\right)\\
&\times \left(J(z)^{\varphi^{+}}+J(z)^{\varphi^{-}}\right) d x d y d z .
\end{aligned}
\end{equation}
 Now, let us show that
\begin{equation} \label{eqclaim}
\int_{\mathbb{R}^{N} \times \mathbb{R}^{N}}\Phi_{x,y}\left(|u(x-\varepsilon z)-u(y-\varepsilon z)-u(x)+u(y)| K(x, y)\right) d x d y\longrightarrow 0,
\end{equation}
as $\varepsilon \longrightarrow 0$.\\
Fix $z \in B_1$ and put $w=(z, z) \in \mathbb{R}^{N} \times \mathbb{R}^{N}$. We define the function $v: \mathbb{R}^{N} \times \mathbb{R}^{N} \rightarrow \mathbb{R}$ by
$$
v(x, y)=(u(x)-u(y))K(x, y), \quad \forall(x, y) \in \mathbb{R}^{N} \times \mathbb{R}^{N} .
$$
Then $v \in L_{\Phi_{x,y}}(\mathbb{R}^{N} \times \mathbb{R}^{N})$. If $\varepsilon^{\prime}>0$, by proposition \ref{2prop2}, there exists $\mathrm{g} \in C_0^{\infty}(\mathbb{R}^{N} \times \mathbb{R}^{N})$ with $\displaystyle\|v-g\|_{L_{\Phi_{x,y}}(\mathbb{R}^{N} \times \mathbb{R}^{N})}<\frac{\varepsilon^{\prime}}{3}$, so
$$
\begin{aligned}
& \|v(.-\varepsilon w)-v\|_{L_{\Phi_{x,y}}(\mathbb{R}^{N} \times \mathbb{R}^{N})} \\
\leq & \|v(.-\varepsilon w)-g(.-\varepsilon w)\|_{L_{\Phi_{x,y}}(\mathbb{R}^{N} \times \mathbb{R}^{N})}+\|g(.-\varepsilon w)-g\|_{L_{\Phi_{x,y}}(\mathbb{R}^{N} \times \mathbb{R}^{N})}\\
&+\|v-g\|_{L_{\Phi_{x,y}}(\mathbb{R}^{N} \times \mathbb{R}^{N})} \\
\leq & \frac{\varepsilon^{\prime}}{3}+\frac{\varepsilon^{\prime}}{3}+\frac{\varepsilon^{\prime}}{3}=\varepsilon^{\prime},
\end{aligned}
$$
with $\varepsilon$ is sufficiently small. Then \eqref{eqclaim} follows.
Moreover, for a.e. $z \in B_1$, we have
\begin{equation}\label{eq2.3}
\begin{aligned}
& \left(J(z)^{\varphi^{+}}+J(z)^{\varphi^{-}}\right) \int_{\mathbb{R}^{N} \times \mathbb{R}^{N}}\Phi_{x,y}\left(|u(x-\varepsilon z)-u(y-\varepsilon z)-u(x)+u(y)|K(x,y)\right) d x d y \\
& \leq 2^{\varphi^{+}-1}\left(J(z)^{\varphi^{+}}+J(z)^{\varphi^{-}}\right)\left(\int_{\mathbb{R}^{N} \times \mathbb{R}^{N}}\Phi_{x,y}\left(|u(x-\varepsilon z)-u(y-\varepsilon z)|K(x,y)\right)d x d y\right. \\
& \left.+\int_{\mathbb{R}^{N} \times \mathbb{R}^{N}}\Phi_{x,y}\left(|u(x)-u(y)|K(x, y)\right) d x d y\right) \\
& \leq 2^{\varphi^{+}}\left(J(z)^{\varphi^{+}}+J(z)^{\varphi^{-}}\right) \int_{\mathbb{R}^{N} \times \mathbb{R}^{N}}\Phi_{x,y}\left(|u(x)-u(y)|K(x,y)\right) d x d y \in L^{\infty}\left(B_1\right),
\end{aligned}
\end{equation}
for any $\varepsilon>0$. Therefore, by using \eqref{eqclaim}, \eqref{eq2.3} and the dominated convergence theorem, we get
$$
\begin{aligned}
&\int_{B_1} \int_{\mathbb{R}^{N} \times \mathbb{R}^{N}}\Phi_{x,y}\left(|u(x-\varepsilon z)-u(y-\varepsilon z)-u(x)+u(y)|K(x, y)\right)\\&\left(J(z)^{\varphi^{+}}+J(z)^{\varphi^{-}}\right) d x d y d z \longrightarrow 0, \textsl{ as } \varepsilon\longrightarrow 0.
\end{aligned}
$$
From the above assertion and \eqref{eq2.007}, we obtain \eqref{eq2.1}. Using this fact in combination with \eqref{eq2.01}, we conclude our proof.
\end{proof}

Now, we will discuss the cut-off technique needed for the density argument. For any $j \in \mathbb{N}$, let $\tau_j \in C^{\infty}\left(\mathbb{R}^N\right)$ be such that
\begin{equation}\label{eqcut}
\begin{gathered}
0 \leq \tau_j(x) \leq 1, \quad \forall x \in \mathbb{R}^N, \\
\tau_j(x)= \begin{cases}1 & \text { if } x \in B_j, \\
0 & \text { if } x \in \mathbb{R}^N \backslash B_{j+1},\end{cases}
\end{gathered}
\end{equation}
where $B_j$ denotes the ball centered at zero with radius $j$.

\begin{lemma}\label{2lem4} Let $u \in W^{s,\Phi_{x,y}}\left(\mathbb{R}^N\right)$. Then $\tau_j u \in W^{s,\Phi_{x,y}}\left(\mathbb{R}^N\right)$.
\end{lemma}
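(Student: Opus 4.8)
The plan is to verify the two requirements in the definition of $W^{s,\Phi_{x,y}}(\mathbb{R}^N)$ for $\tau_j u$: that $\tau_j u\in L_{\widehat{\Phi}_x}(\mathbb{R}^N)$ and that the Gagliardo modular $\int_{\mathbb{R}^N}\int_{\mathbb{R}^N}\Phi_{x,y}\big(|\tau_j u(x)-\tau_j u(y)|K(x,y)\big)\,dx\,dy$ is finite (equivalently, by the $\Delta_2$-condition on $\Phi_{x,y}$, that $[\tau_j u]_{s,\Phi_{x,y}}<\infty$). The first point is immediate: from $0\le\tau_j\le1$ in \eqref{eqcut} we have $|\tau_j(x)u(x)|\le|u(x)|$ pointwise, hence $\widehat{\Phi}_x(|\tau_j u(x)|)\le\widehat{\Phi}_x(|u(x)|)$ by monotonicity of $\widehat{\Phi}_x$; since $u\in W^{s,\Phi_{x,y}}(\mathbb{R}^N)\subset L_{\widehat{\Phi}_x}(\mathbb{R}^N)=K_{\widehat{\Phi}_x}(\mathbb{R}^N)$ we get $\int_{\mathbb{R}^N}\widehat{\Phi}_x(|\tau_j u|)\,dx<\infty$, i.e. $\tau_j u\in K_{\widehat{\Phi}_x}(\mathbb{R}^N)=L_{\widehat{\Phi}_x}(\mathbb{R}^N)$.

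For the modular I would start from the algebraic identity $\tau_j(x)u(x)-\tau_j(y)u(y)=\tau_j(x)\big(u(x)-u(y)\big)+u(y)\big(\tau_j(x)-\tau_j(y)\big)$ and combine it with the elementary estimate $|\tau_j(x)-\tau_j(y)|\le C_j\,\delta(x,y)$, valid with $C_j:=\max\{1,\|\nabla\tau_j\|_{L^\infty(\mathbb{R}^N)}\}$ and $\delta(x,y)=\min\{1,|x-y|\}$ because $\tau_j$ is smooth, $[0,1]$-valued and has bounded gradient. Since $\tau_j(x)\le1$, this yields
$$|\tau_j u(x)-\tau_j u(y)|\,K(x,y)\le|u(x)-u(y)|\,K(x,y)+C_j\,|u(y)|\,\delta(x,y)K(x,y),$$
and then convexity of $\Phi_{x,y}$ followed by the $\Delta_2$-condition produces a constant $C>0$ with
$$\Phi_{x,y}\big(|\tau_j u(x)-\tau_j u(y)|K(x,y)\big)\le C\,\Phi_{x,y}\big(|u(x)-u(y)|K(x,y)\big)+C\,\Phi_{x,y}\big(C_j|u(y)|\,\delta(x,y)K(x,y)\big).$$
Integrating over $\mathbb{R}^N\times\mathbb{R}^N$, the first term on the right is finite since $u\in W^{s,\Phi_{x,y}}(\mathbb{R}^N)$ and $\Phi_{x,y}\in\Delta_2$, so the whole argument reduces to proving that $\int_{\mathbb{R}^N}\int_{\mathbb{R}^N}\Phi_{x,y}\big(C_j|u(y)|\,\delta(x,y)K(x,y)\big)\,dx\,dy<\infty$.

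The last estimate is the heart of the proof, and it is where the compact support of $\tau_j$ must be exploited: the factor $u(y)(\tau_j(x)-\tau_j(y))$ vanishes unless at least one of $x,y$ lies in $B_{j+1}$, so it suffices to bound the integral over $(B_{j+1}\times\mathbb{R}^N)\cup(\mathbb{R}^N\times B_{j+1})$. On this set I would split according to whether $|x-y|\le1$ or $|x-y|>1$. In the region $|x-y|\le1$, whichever variable is already confined to $B_{j+1}$ forces the other into $B_{j+2}$, so both variables range over a fixed bounded set; there $\delta(x,y)=|x-y|$, so $\delta K\in L_{\Phi_{x,y}}$ by assumption~(a), and splitting further into $\{|u|\le1\}$ and $\{|u|>1\}$ and using $\Phi_{x,y}(\sigma t)\le\sigma^{\varphi^{+}}\Phi_{x,y}(t)$ for $\sigma>1$ (Lemma~\ref{lemmapro}), the $\Delta_2$-condition, and the local integrability in \eqref{eq1.3} together with the fact that $u$ lies locally in a Lebesgue space of exponent $>1$ by \eqref{eq1.6}, one obtains a finite contribution. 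In the region $|x-y|>1$ one has $\delta(x,y)=1$ and, from \eqref{eq1.333} combined with Lemma~\ref{lemmapro}, $\Phi_{x,y}^{-1}(|x-y|^N)$ is bounded below and in fact grows like a positive power of $|x-y|$, so $K(x,y)$ is bounded and decays polynomially; combining this with $\Phi_{x,y}(t)\le C t^{\varphi^{-}}$ for $t\le1$ (again Lemma~\ref{lemmapro}) and the integrability of $u$ makes this contribution finite as well. Adding the two regions gives the required bound, hence $\tau_j u\in W^{s,\Phi_{x,y}}(\mathbb{R}^N)$. I expect the main obstacle to be precisely this cross integral: neither $|u(y)|$ nor $\delta(x,y)K(x,y)$ is bounded by itself, so the argument must use \emph{simultaneously} the spatial localisation given by the cut-off, assumption~(a) near the diagonal, and the boundedness and decay of $K$ away from the diagonal furnished by \eqref{eq1.333}.
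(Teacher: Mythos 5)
Your overall structure (splitting $\tau_j(x)u(x)-\tau_j(y)u(y)=\tau_j(x)(u(x)-u(y))+u(y)(\tau_j(x)-\tau_j(y))$, handling the first term by $\tau_j\le 1$, and the membership $\tau_ju\in L_{\widehat{\Phi}_x}(\mathbb{R}^N)$) matches the paper, but your treatment of the cross term --- exactly the point you identify as the heart of the proof --- has a genuine gap. On the set $\{|u(y)|>1\}$ you propose to use $\Phi_{x,y}(\sigma t)\le \sigma^{\varphi^{+}}\Phi_{x,y}(t)$ for $\sigma>1$, which gives the pointwise bound $(C_j|u(y)|)^{\varphi^{+}}\Phi_{x,y}(\delta(x,y)K(x,y))$. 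To integrate this you would need both that $|u|^{\varphi^{+}}$ is integrable on $B_{j+2}$ and that $\int_{\mathbb{R}^N}\Phi_{x,y}(\delta(x,y)K(x,y))\,dx$ is bounded uniformly in $y$ (assumption (a) plus $\Delta_2$ only makes the double integral finite, hence the $x$-integral finite for a.e.\ $y$, so the two factors cannot simply be multiplied). Neither ingredient is available: from $u\in L_{\widehat{\Phi}_x}$ and \eqref{eq1.6} one can at best hope for local $L^{\varphi^{-}}$-integrability of $u$ (and even that needs a lower bound on $\widehat{\Phi}_x(1)$, which \eqref{eq1.333} does not give --- it is only an upper bound), whereas the exponent produced by Lemma \ref{lemmapro} is $\varphi^{+}\ge\varphi^{-}$; moreover no hypothesis compares $\Phi_{x,y}$ with $\widehat{\Phi}_y$ off the diagonal, so $\int\widehat{\Phi}_y(|u(y)|)\,dy<\infty$ cannot be converted into control of $\Phi_{x,y}(|u(y)|\cdot\,\cdot)$. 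The far region $|x-y|>1$ suffers from the same problem on $\{|u|>1\}$, and there the appeal to ``integrability of $u$'' is unavailable ($u$ is not assumed in $L^1$); also the decay $K(x,y)\lesssim |x-y|^{-s-N/\varphi^{+}}$ that \eqref{eq1.333} yields gives the modular a decay of order $|x-y|^{-\varphi^{-}(s+N/\varphi^{+})}$, which need not exceed $|x-y|^{-N}$ (take $\varphi^{-}$ near $1$ and $\varphi^{+}$ large). Note, by contrast, that on $\{|u(y)|\le 1\}$ the cross term is simply bounded by $\Phi_{x,y}(C_j\,\delta K)$, finite by (a) and $\Delta_2$, so the region splitting does not help where it is actually needed.

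The paper resolves this difficulty differently: it invokes Lemma \ref{2lem2} to reduce to $u\in L^{\infty}(\mathbb{R}^N)$, uses Lemma \ref{lemmapro} to pull $\|u\|_{L^{\infty}}$ out as a constant, obtaining $\Phi_{x,y}(|u(y)||\tau_j(x)-\tau_j(y)|K)\le C\,\Phi_{x,y}(|\tau_j(x)-\tau_j(y)|K)$, and then applies Lemma \ref{2lem1} to $\tau_j\in C_0^{\infty}(\mathbb{R}^N)$ (whose proof is where your Lipschitz bound $|\tau_j(x)-\tau_j(y)|\le C\min\{1,|x-y|\}$ and assumption (a) actually enter). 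To repair your argument you should either insert this truncation step (and, if you want a fully rigorous statement for arbitrary $u$, justify the passage from the truncations back to $u$, e.g.\ by a Fatou-type argument with uniform modular bounds), or add a hypothesis strong enough to integrate $|u|^{\varphi^{+}}\Phi_{x,y}(\delta K)$; as written, the combination of \eqref{eq1.3}, \eqref{eq1.6} and Lemma \ref{lemmapro} does not close the estimate.
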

\begin{proof}
Let $u \in W^{s,\Phi_{x,y}}\left(\mathbb{R}^N\right)$. It is clear that $\tau_j u \in L_{\widehat{\Phi}_{x}}\left(\mathbb{R}^N\right)$ since $\left|\tau_j\right| \leq 1$. Furthermore,
$$
\begin{aligned}
& \int_{\mathbb{R}^N} \int_{\mathbb{R}^N} \Phi_{x,y}\left(\left|\tau_j(x) u(x)-\tau_j(y) u(y)\right|K(x,y)\right) d x d y \\
&\leq  2^{\varphi^{+}-1} \int_{\mathbb{R}^N} \int_{\mathbb{R}^N} \Phi_{x,y}\left(\left|\tau_j(x)(u(x)-u(y))\right|K(x,y)\right)  d x d y \\
& +2^{\varphi^{+}-1} \int_{\mathbb{R}^N} \int_{\mathbb{R}^N} \Phi_{x,y}\left(\left|u(y)\left(\tau_j(x)-\tau_j(y)\right)\right|K(x,y)\right)d x d y\\
&\leq  2^{\varphi^{+}-1} \int_{\mathbb{R}^N} \int_{\mathbb{R}^N} \Phi_{x,y}\left(|u(x)-u(y)|K(x,y)\right)d x d y\\
& +2^{\varphi^{+}-1} \int_{\mathbb{R}^N} \int_{\mathbb{R}^N} \Phi_{x,y}\left(\left|u(y)\left(\tau_j(x)-\tau_j(y)\right)\right|K(x,y)\right)d x d y,
\end{aligned}
$$
where the integral
$$
\int_{\mathbb{R}^N} \int_{\mathbb{R}^N} \Phi_{x,y}\left(|u(x)-u(y)|K(x,y)\right)d x d y
$$
is finite since $u \in W^{s,\Phi_{x,y}}\left(\mathbb{R}^N\right)$.
According to Lemma \ref{2lem2}, we can assume that $u \in L^{\infty}\left(\mathbb{R}^N\right)$. Therefore,
\begin{equation}\label{ieqnew}
\begin{aligned}
&\int_{\mathbb{R}^N} \int_{\mathbb{R}^N} \Phi_{x,y}\left(\left|u(y)\left(\tau_j(x)-\tau_j(y)\right)\right|K(x,y)\right)dx dy\\
&\leq C \int_{\mathbb{R}^N} \int_{\mathbb{R}^N} \Phi_{x,y}\left(\left|\left(\tau_j(x)-\tau_j(y)\right)\right|K(x,y)\right)d x d y,
\end{aligned}
\end{equation}
where the constant $C$ depends on $\varphi^{+}, \varphi^{-}$, and $\|u\|_{L^{\infty}\left(\mathbb{R}^N\right)}$. Finally, since $ \tau_{j}\in C^{\infty}_{0}(\mathbb{R}^{N})$, then by Lemma \ref{2lem1} and the inequality \eqref{ieqnew}, we get
$$
\int_{\mathbb{R}^N} \int_{\mathbb{R}^N} \Phi_{x,y}\left(\left|u(y)\left(\tau_j(x)-\tau_j(y)\right)\right|K(x,y)\right) d x d y<\infty .
$$
This concludes the proof.
\end{proof}

\begin{lemma}\label{lem2.8}
 Let $u \in W^{s,\Phi_{x,y}}\left(\mathbb{R}^N\right)$. Then $\operatorname{supp}\left(\tau_j u\right) \subseteq \overline{B}_{j+1} \cap \operatorname{supp} u$, and
$$
\left\|\tau_j u-u\right\|_{W^{s,\Phi_{x,y}}\left(\mathbb{R}^N\right)} \longrightarrow 0 \quad \text { as } j \longrightarrow+\infty .
$$
\end{lemma}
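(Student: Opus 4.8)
The plan is to verify the support inclusion directly, and then to prove the norm convergence by splitting the norm \eqref{Norm} into its two pieces and treating each through a dominated–convergence argument at the level of the modular functional. The support statement is immediate: $\tau_j$ is continuous and vanishes outside $B_{j+1}$, so $\operatorname{supp}\tau_j\subseteq\overline{B}_{j+1}$, and $\tau_j u$ vanishes wherever $u$ does; hence $\operatorname{supp}(\tau_j u)\subseteq\overline{B}_{j+1}\cap\operatorname{supp}u$. For the Luxemburg part of the norm, I would observe that $u-\tau_j u=(1-\tau_j)u$ is supported in $\mathbb{R}^N\setminus B_j$ and satisfies $|u-\tau_j u|\le|u|$; since $u\in L_{\widehat\Phi_x}(\mathbb{R}^N)=K_{\widehat\Phi_x}(\mathbb{R}^N)$ (by \eqref{1.6eq2}) we have $\widehat\Phi_x(|u|)\in L^1(\mathbb{R}^N)$, so by monotonicity of $\widehat\Phi_x$, $\int_{\mathbb{R}^N}\widehat\Phi_x(|u-\tau_j u|)\,dx\le\int_{\mathbb{R}^N\setminus B_j}\widehat\Phi_x(|u|)\,dx\longrightarrow0$ as $j\to+\infty$ by dominated convergence, and Proposition \ref{2prop1} then gives $\|u-\tau_j u\|_{L_{\widehat\Phi_x}(\mathbb{R}^N)}\to0$.

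It remains to handle the Gagliardo seminorm. Since $\Phi_{x,y}\in\Delta_2$, by the modular characterisation of convergence in $L_{\Phi_{x,y}}(\mathbb{R}^N\times\mathbb{R}^N)$ (the analogue of Proposition \ref{2prop1}) it suffices to prove that
$$\mathcal{M}_j:=\int_{\mathbb{R}^N}\int_{\mathbb{R}^N}\Phi_{x,y}\big(|(u-\tau_j u)(x)-(u-\tau_j u)(y)|\,K(x,y)\big)\,dx\,dy\longrightarrow0.$$
Writing $(u-\tau_j u)(x)-(u-\tau_j u)(y)=(1-\tau_j(x))(u(x)-u(y))-u(y)(\tau_j(x)-\tau_j(y))$ and using the convexity of $\Phi_{x,y}$ together with Lemma \ref{lemmapro}, one gets $\mathcal{M}_j\le 2^{\varphi^{+}-1}(A_j+B_j)$, where $A_j$ carries the factor $1-\tau_j(x)$ and $B_j$ the factor $\tau_j(x)-\tau_j(y)$. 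For $A_j$: since $0\le\tau_j\le1$ and $\tau_j\equiv1$ on $B_j$, one has $0\le1-\tau_j(x)\le1$ with $1-\tau_j(x)=0$ when $x\in B_j$, so $A_j\le\int_{\mathbb{R}^N\setminus B_j}\int_{\mathbb{R}^N}\Phi_{x,y}(|u(x)-u(y)|K)\,dy\,dx$; as $u\in W^{s,\Phi_{x,y}}(\mathbb{R}^N)=W^sK_{\Phi_{x,y}}(\mathbb{R}^N)$ (recall $\Phi_{x,y}\in\Delta_2$), the function $(x,y)\mapsto\Phi_{x,y}(|u(x)-u(y)|K)$ lies in $L^1(\mathbb{R}^N\times\mathbb{R}^N)$, hence $A_j\to0$ by dominated convergence.

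The estimate of $B_j=\int_{\mathbb{R}^N}\int_{\mathbb{R}^N}\Phi_{x,y}\big(|u(y)|\,|\tau_j(x)-\tau_j(y)|\,K(x,y)\big)\,dx\,dy$ is where the real work lies, and I expect it to be the main obstacle. Two ingredients are needed. First, I would fix the cut-offs $\tau_j$ with a uniform gradient bound $\sup_{j}\|\nabla\tau_j\|_{L^\infty(\mathbb{R}^N)}\le L$ — legitimate since each transition annulus $B_{j+1}\setminus B_j$ has width one — so that $|\tau_j(x)-\tau_j(y)|\le\min\{1,L|x-y|\}\le\max\{1,L\}\,\delta(x,y)$, while $\tau_j(x)-\tau_j(y)=0$ whenever $x,y\in B_j$; in particular the integrand of $B_j$ converges to $0$ at almost every $(x,y)\in\mathbb{R}^N\times\mathbb{R}^N$. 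Second, I need a $j$-independent $L^1$ majorant in order to apply dominated convergence: reducing to $u\in L^\infty(\mathbb{R}^N)$ as in the proof of Lemma \ref{2lem4} (the delicate point, since the coefficient $u(y)$ is a priori unbounded), Lemma \ref{lemmapro} yields $\Phi_{x,y}\big(|u(y)|\,|\tau_j(x)-\tau_j(y)|\,K\big)\le C\,\Phi_{x,y}\big(\delta(x,y)K(x,y)\big)$ with $C$ depending only on $\varphi^{-},\varphi^{+},L$ and $\|u\|_{L^\infty(\mathbb{R}^N)}$, and $\Phi_{x,y}(\delta K)\in L^1(\mathbb{R}^N\times\mathbb{R}^N)$ by assumption (a) and the $\Delta_2$-condition. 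Dominated convergence then gives $B_j\to0$, hence $\mathcal{M}_j\to0$ and $[u-\tau_j u]_{s,\Phi_{x,y}}\to0$; combined with the Luxemburg part, this yields $\|\tau_j u-u\|_{W^{s,\Phi_{x,y}}(\mathbb{R}^N)}\to0$.
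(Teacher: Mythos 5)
Your argument is correct and follows the paper's own proof step for step: the same decomposition $(1-\tau_j(x))(u(x)-u(y))-u(y)(\tau_j(x)-\tau_j(y))$, the same reduction to $u\in L^\infty(\mathbb{R}^N)$ via Lemma~\ref{2lem2}, and the same dominated-convergence treatment of the modular integrals for both the Luxemburg and Gagliardo parts. You are in fact slightly more careful than the paper on the $B_j$ term: you note explicitly that the $\tau_j$ can be chosen with a $j$-uniform gradient bound $L$, so that $\Phi_{x,y}\bigl(|\tau_j(x)-\tau_j(y)|K\bigr)\le C\,\Phi_{x,y}(\delta K)$ is a single $L^1$ dominating function, whereas the paper invokes Lemma~\ref{2lem1} only to conclude $\Phi_{x,y}\bigl(|\tau_j(x)-\tau_j(y)|K\bigr)\in L^1$ for each fixed $j$, which by itself does not furnish the $j$-independent majorant that the dominated convergence theorem requires.
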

\begin{proof}
By \eqref{eqcut} and \cite[Lemma~9 ]{fiscella2015density}, we get
$$
\operatorname{supp}\left(\tau_j u\right) \subseteq \overline{B}_{j+1} \cap \operatorname{supp} u .
$$
Now, let us show that
$$
\left\|\tau_j u-u\right\|_{W^{s,\Phi_{x,y}}\left(\mathbb{R}^N\right)} \longrightarrow 0 \text { as } j \longrightarrow+\infty .
$$
From Proposition \ref{2prop1}, it suffices to prove that
$$
\int_{\mathbb{R}^N}\widehat{\Phi}_{x}\left(\left|\tau_j(x) u(x)-u(x)\right|\right) d x \longrightarrow 0 \quad \text { as } j \longrightarrow+\infty
$$
and
$$
\int_{\mathbb{R}^N} \int_{\mathbb{R}^N} \Phi_{x,y}\left(\left|\tau_j(x) u(x)-u(x)-\tau_j(y) u(y)+u(y)\right|K(x,y)\right)d x d y\longrightarrow 0,
$$
as  $j \longrightarrow+\infty$. We observe that
$$
\begin{aligned}
\widehat{\Phi}_{x}\left(\left|\tau_j(x) u(x)-u(x)\right|\right) & \leq\widehat{\Phi}_{x}\left(2|u(x)|\right) \\
& \leq 2^{\varphi^{+}}\widehat{\Phi}_{x}\left(|u(x)|\right)\in L^1\left(\mathbb{R}^N\right) .
\end{aligned}
$$
Moreover, by \eqref{eqcut} we have
$$
\widehat{\Phi}_{x}\left(\left|\tau_j(x) u(x)-u(x)\right|\right) \longrightarrow 0 \quad \text { as } j \longrightarrow+\infty \text { a.e. in } \mathbb{R}^N.
$$
Then, by using the dominated convergence theorem, we get
$$
\int_{\mathbb{R}^N}\widehat{\Phi}_{x}\left(\left|\tau_j(x) u(x)-u(x)\right|\right) \longrightarrow 0 \quad \text { as } j \longrightarrow+\infty .
$$
Now, let us show that
$$
\int_{\mathbb{R}^N} \int_{\mathbb{R}^N} \Phi_{x,y}\left(\left|\tau_j(x) u(x)-u(x)-\tau_j(y) u(y)+u(y)\right|K(x,y)\right)d x d y \longrightarrow 0,
$$
as $j \longrightarrow+\infty$. We set $\eta_j=1-\tau_j$. Then $\eta_j u=u-\tau_j u$. Moreover,
$$
\begin{aligned}
& \left|\tau_j(x) u(x)-u(x)-\tau_j(y) u(y)+u(y)\right| \\
& \quad=\left|\eta_j(x)(u(x)-u(y))-\left(\tau_j(y)-\tau_j(x)\right) u(y)\right| .
\end{aligned}
$$
Therefore,
$$
\begin{aligned}
& \int_{\mathbb{R}^N} \int_{\mathbb{R}^N} \Phi_{x,y}\left(\left|\tau_j(x) u(x)-u(x)-\tau_j(y) u(y)+u(y)\right|K(x,y)\right)d x d y \\
& \leq 2^{\varphi^{+}-1} \int_{\mathbb{R}^N} \int_{\mathbb{R}^N} \Phi_{x,y}\left(\left|\tau_j(x)-\tau_j(y)\right|K(x,y)|u(y)|\right)d x d y \\
& \quad+2^{\varphi^{+}-1} \int_{\mathbb{R}^N} \int_{\mathbb{R}^N} \Phi_{x,y}\left(|u(x)-u(y)|K(x,y) \eta_j(x)\right) d x d y .
\end{aligned}
$$
Note that by Lemma \ref{2lem2}, we can assume that $u \in L^{\infty}\left(\mathbb{R}^N\right)$. Hence,
$$
\begin{aligned}
&\Phi_{x,y}\left(\left|\tau_j(x)-\tau_j(y)\right|K(x,y)|u(y)|\right)\\ &\leq C\left(\|u\|_{L^{\infty}\left(\mathbb{R}^N\right)}, \varphi^{+}, \varphi^{-}\right) \Phi_{x,y}\left(\left|\tau_j(x)-\tau_j(y)\right|K(x,y)\right).
\end{aligned}
$$
By Lemma \ref{2lem1}, we have
$$
\Phi_{x,y}\left(\left|\tau_j(x)-\tau_j(y)\right|K(x,y)\right) \in L^1\left(\mathbb{R}^N \times \mathbb{R}^N\right) .
$$
Furthermore, we have
$$
\Phi_{x,y}\left(\left|\tau_j(x)-\tau_j(y)\right|K(x,y)|u(y)|\right)  \longrightarrow 0 \quad \text { as } j \longrightarrow \infty \text { a.e. in } \mathbb{R}^N \times \mathbb{R}^N.
$$
Hence, by using the dominated convergence theorem, we get
$$
\int_{\mathbb{R}^N} \int_{\mathbb{R}^N} \Phi_{x,y}\left(\left|\tau_j(x)-\tau_j(y)\right||u(y)|K(x,y)\right)d x d y \longrightarrow 0 \quad \text { as } j \longrightarrow \infty .
$$
Also, we have
$$
\Phi_{x,y}\left(|u(x)-u(y)|\eta_j(x)K(x,y)\right)\ \leq \eta_j(x)^{\varphi^{-}} \Phi_{x,y}\left(|u(x)-u(y)|K(x,y)\right)
$$
and
$$
\Phi_{x,y}\left(|u(x)-u(y)|K(x,y)\right) \in L^1\left(\mathbb{R}^N \times \mathbb{R}^N\right).
$$
Again by \eqref{eqcut}, we have
$$
\Phi_{x,y}\left(|u(x)-u(y)|K(x,y)\eta_j(x)\right) \longrightarrow 0 \quad \text { as } j \longrightarrow \infty \text { a.e. in } \mathbb{R}^N \times \mathbb{R}^N .
$$
Hence, by the dominated convergence theorem, we have
$$
\int_{\mathbb{R}^N} \int_{\mathbb{R}^N} \Phi_{x,y}\left(|u(x)-u(y)|K(x,y)\eta_j(x)\right)d x d y \longrightarrow 0 \quad \text { as } j \longrightarrow \infty .
$$
This concludes the proof.
\end{proof}

\section{Approximation by smooth and compactly supported functions}\label{sec3}
 This section is aimed at proving  some density properties of smooth and compactly supported functions in the new fractional Musielak-Sobolev spaces.
\begin{theorem}\label{theo3.1}
Assume that \eqref{eq1.333} and \eqref{eq1.5} hold. Let $u \in W^{s,\Phi_{x,y}}\left(\mathbb{R}^N\right)$, then for any fixed $\delta>0$, there exists a continuous and compactly supported function $u_\delta$ such that
$$
\left\|u-u_\delta\right\|_{W^{s,\Phi_{x,y}}\left(\mathbb{R}^N\right)} \longrightarrow 0 \quad \text { as } \delta \longrightarrow 0 .
$$
\end{theorem}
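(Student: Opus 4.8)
The plan is to reach the conclusion by combining the cut-off Lemma \ref{lem2.8} with the mollification Lemma \ref{lem2.06} through a diagonal argument, using the elementary fact that the convolution of a compactly supported function with $J_\varepsilon$ is $C^\infty$ and compactly supported, hence in particular continuous and compactly supported. First, fix $u\in W^{s,\Phi_{x,y}}(\mathbb{R}^N)$ and $\delta>0$. By Lemma \ref{lem2.8} there is $j=j(\delta)\in\mathbb{N}$ with
$$\|u-\tau_j u\|_{W^{s,\Phi_{x,y}}(\mathbb{R}^N)}<\delta/2,$$
and by Lemma \ref{2lem4} the function $\tau_j u$ again belongs to $W^{s,\Phi_{x,y}}(\mathbb{R}^N)$; moreover $\operatorname{supp}(\tau_j u)\subseteq\overline{B}_{j+1}$, so it is compactly supported.

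Next, I apply Lemma \ref{lem2.06} to the \emph{fixed} function $\tau_j u$: its mollifications $(\tau_j u)_\varepsilon=(\tau_j u)*J_\varepsilon$ satisfy $\|(\tau_j u)_\varepsilon-\tau_j u\|_{W^{s,\Phi_{x,y}}(\mathbb{R}^N)}\to 0$ as $\varepsilon\to 0$. The hypotheses required by Lemma \ref{lem2.06} are at our disposal: \eqref{eq1.4} and \eqref{eq1.5} hold throughout, while \eqref{eq1.3} follows from \eqref{eq1.333} together with the $\Delta_2$-condition coming from \eqref{eq1.5}, since iterating the $\Delta_2$ inequality bounds $\Phi_{x,y}(c)$ (resp. $\widehat{\Phi}_x(c)$) by a constant multiple of $\Phi_{x,y}(1)\le\sup_{(x,y)\in\Omega\times\Omega}\Phi_{x,y}(1)$ (resp. $\widehat{\Phi}_x(1)\le\sup_{x\in\Omega}\widehat{\Phi}_x(1)$), both finite. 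Since $\operatorname{supp}(\tau_j u)\subseteq\overline{B}_{j+1}$, for every $\varepsilon\in(0,1)$ we have $(\tau_j u)_\varepsilon\in C^\infty(\mathbb{R}^N)$ with $\operatorname{supp}(\tau_j u)_\varepsilon\subseteq\overline{B}_{j+2}$, hence $(\tau_j u)_\varepsilon\in C_0^\infty(\mathbb{R}^N)$. We may therefore choose $\varepsilon=\varepsilon(\delta)\in(0,1)$ with $\|(\tau_j u)_\varepsilon-\tau_j u\|_{W^{s,\Phi_{x,y}}(\mathbb{R}^N)}<\delta/2$.

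Setting $u_\delta:=(\tau_{j(\delta)}u)_{\varepsilon(\delta)}$, we obtain a continuous and compactly supported function for which the triangle inequality for the norm \eqref{Norm} gives
$$\|u-u_\delta\|_{W^{s,\Phi_{x,y}}(\mathbb{R}^N)}\le\|u-\tau_j u\|_{W^{s,\Phi_{x,y}}(\mathbb{R}^N)}+\|\tau_j u-u_\delta\|_{W^{s,\Phi_{x,y}}(\mathbb{R}^N)}<\delta,$$
whence $\|u-u_\delta\|_{W^{s,\Phi_{x,y}}(\mathbb{R}^N)}\to 0$ as $\delta\to 0$. The one point that deserves care is the order of the two limiting processes: one must first fix $j$ (hence the function $\tau_j u$) and only afterwards let the mollification parameter tend to $0$, because Lemma \ref{lem2.06} provides $v_\varepsilon\to v$ for an \emph{individual} $v\in W^{s,\Phi_{x,y}}(\mathbb{R}^N)$, not uniformly over a varying family. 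The diagonal choice above respects this, and Lemma \ref{2lem4} is precisely what guarantees that $v=\tau_j u$ is an admissible input for Lemma \ref{lem2.06}. Everything else is routine, the genuine analytic work having already been carried out in Lemmas \ref{lem2.8}, \ref{2lem4} and \ref{lem2.06}; if one wished to start from bounded data, Lemmas \ref{2lem2} and \ref{2lem3} would allow that reduction, but it is unnecessary here.
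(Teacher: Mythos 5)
Your argument is internally coherent, and your observation that \eqref{eq1.3} is a consequence of \eqref{eq1.333} together with \eqref{eq1.5} (via $\Phi_{x,y}(c)\le c^{\varphi^{+}}\Phi_{x,y}(1)$ for $c>1$ and monotonicity for $c\le 1$) is correct. The problem is the other hypothesis you wave through: Theorem \ref{theo3.1} is stated under \eqref{eq1.333} and \eqref{eq1.5} only, and deliberately does \emph{not} assume the translation-invariance condition \eqref{eq1.4}, whereas your proof hinges on Lemma \ref{lem2.06}, whose proof uses \eqref{eq1.4} in an essential way: the step $\|v(\cdot-\varepsilon w)-v\|_{L_{\Phi_{x,y}}(\mathbb{R}^N\times\mathbb{R}^N)}\to 0$ requires that translating the argument does not change the Musielak function; without \eqref{eq1.4} the translated modular is measured by $\Phi_{x+\varepsilon z,\,y+\varepsilon z}$ rather than $\Phi_{x,y}$, and continuity of translation genuinely fails in general Musielak--Orlicz settings (already for variable exponents $\Phi_{x,y}(t)=t^{p(x,y)}$ with rough $p$). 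So your mollification step would break precisely in the generality the theorem is claiming; declaring that ``\eqref{eq1.4} holds throughout'' proves a weaker statement than the one asked, namely the conclusion under the hypotheses of Theorem \ref{theo3.2}, of which your argument is essentially a reproduction (cut-off via Lemmas \ref{2lem4} and \ref{lem2.8}, then convolution via Lemma \ref{lem2.06}); this would make Theorem \ref{theo3.1} redundant rather than an independent, weaker-hypothesis result.

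The paper's route avoids convolution altogether, which is exactly why only \eqref{eq1.333} and \eqref{eq1.5} are needed and why the approximant is merely continuous rather than smooth: one first reduces to bounded $u$ (Lemma \ref{2lem2}), truncates with the cut-off $\tau_j$ to get a bounded, compactly supported $u_{j_\delta}$ close to $u$, and then applies Lusin's theorem (where \eqref{eq1.333} and Lemma \ref{lemmapro} are what control the modular on the small exceptional set $E_\delta$, $\mu(E_\delta)\le\delta$) to replace $u_{j_\delta}$ by a continuous, compactly supported $u_\delta$; a similar treatment is carried out for the Gagliardo term. To repair your proposal along the theorem's intended lines you would need either to remove the dependence on \eqref{eq1.4} (i.e.\ not mollify), or to accept the stronger hypothesis set and then say explicitly that you are proving a special case of Theorem \ref{theo3.2} instead.
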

\begin{proof}
Let $u \in W^{s,\Phi_{x,y}}\left(\mathbb{R}^N\right)$, then $u\in L_{\widehat{\Phi}_{x}}(\mathbb{R}^{N}) $. Thus according to Lemma \ref{2lem2}, we can assume that $u \in L^{\infty}\left(\mathbb{R}^N\right)$.\\
Let $\tau_j \in C^{\infty}\left(\mathbb{R}^N\right)$ be as in Section \ref{sec2}, with $\tau_j(P)=1$ if $|P| \leq j$ and $\tau_j(P)=0$ if $|P| \geq j+1$. Let $u_j=\tau_j u$. Then
$$
u_j \longrightarrow u \text { a.e. in } \mathbb{R}^N \text { as } j \longrightarrow \infty
$$
and
$$
\widehat{\Phi}_{x}\left(\left|u(x)-u_j(x)\right|\right) \leq 2^{\varphi^{+}}\widehat{\Phi}_{x}\left(|u(x)|\right) \in L^1\left(\mathbb{R}^N\right) .
$$
Therefore, by using the dominated convergence theorem, we get
$$
\int_{\mathbb{R}^N}\widehat{\Phi}_{x}\left(\left|u(x)-u_j(x)\right|\right) d x \longrightarrow 0 \quad \text { as } j \longrightarrow \infty .
$$
Hence, for any fixed $\delta>0$, there exists $j_\delta \in \mathbb{N}$ such that
\begin{equation}\label{eqdelta1}
\int_{\mathbb{R}^N}\widehat{\Phi}_{x}\left(\left|u(x)-u_{j_\delta}(x)\right|\right) d x \leq \delta .
\end{equation}
Since $u_{j_\delta}$ is supported in $\overline{B}_{j+1}$ and $\displaystyle\mu(A)=\int_A d x$ is finite over compact sets, then by using Lusin's theorem (see \cite[Theorem~7.10]{folland1984real} for the definition of the uniform norm), we obtain that there exist a closed set $E_\delta \subset$ $\mathbb{R}^N$ and a continuous and compactly supported function $u_\delta: \mathbb{R}^N \longrightarrow \mathbb{R}$ such that
$$
u_\delta=u_{j_\delta} \quad \text { in } \mathbb{R}^N \backslash E_\delta, \quad \mu\left(E_\delta\right) \leq \delta \quad \text { and } \quad\left\|u_{\delta}\right\|_{L^{\infty}\left(\mathbb{R}^N\right)} \leq\|u_{j_\delta}\|_{L^{\infty}\left(\mathbb{R}^N\right)}.
$$
In particular, since $0 \leq \tau_{j_\delta}(x) \leq 1$, we have
$$
\left\|u_\delta\right\|_{L^{\infty}\left(\mathbb{R}^N\right)} \leq\|u\|_{L^{\infty}\left(\mathbb{R}^N\right)}<\infty .
$$
Therefore, by Lemma \ref{lemmapro} and the assumption \eqref{eq1.333}, we get
$$
\begin{aligned}
\int_{\mathbb{R}^N}\widehat{\Phi}_{x}\left(\left|u_{j_\delta}(x)-u_\delta(x)\right|\right) d x & =\int_{E_\delta}\widehat{\Phi}_{x}\left(\left|u_{j_\delta(x)}-u_\delta(x)\right|\right) d x \\
&\leq 2^{\varphi^{+}-1}\left(\int_{E_\delta} \widehat{\Phi}_{x}\left(| u_{j_{\delta}}(x)|\right) d x +\int_{E_\delta}\widehat{\Phi}_{x}\left(|u_\delta(x)|\right) dx \right) \\
& \leq C\left(\varphi^{+}, \varphi^{-},\|u\|_{L^{\infty}(\mathbb{R}^N)}\right)  \mu\left(E_\delta\right).
\end{aligned}
$$
Hence
\begin{equation}\label{eqdelta2}
\int_{\mathbb{R}^N}\widehat{\Phi}_{x}\left(\left|u_{j_\delta}(x)-u_\delta(x)\right|\right) d x \leq C \delta .
\end{equation}
On the other hand, we have
$$
\begin{aligned}
&\widehat{\Phi}_{x}\left(| u(x) -u_\delta(x)|\right) \\
& \leq 2^{\varphi^{+}-1}\left( \widehat{\Phi}_{x}\left(|u(x)-u_{j_\delta}(x)|\right)+\widehat{\Phi}_{x}\left(|u_{j_\delta}(x)-u_\delta(x)|\right) \right) .
\end{aligned}
$$
Hence
$$
\begin{aligned}
\int_{\mathbb{R}^N}\widehat{\Phi}_{x}\left(|u(x)-u_\delta(x)|\right) d x \leq & 2^{\varphi^{+}-1} \int_{\mathbb{R}^N}\widehat{\Phi}_{x}\left(|u(x)-u_{j_\delta}(x)|\right) d x \\
& +2^{\varphi^{+}-1} \int_{\mathbb{R}^N}\widehat{\Phi}_{x}\left(|u_{j_\delta}(x)-u_\delta(x)|\right) d x .
\end{aligned}
$$
Then, from \eqref{eqdelta1} and \eqref{eqdelta2}, we deduce that
$$
\int_{\mathbb{R}^N}\widehat{\Phi}_{x}\left(|u(x)-u_\delta(x)|\right) d x \longrightarrow 0 \quad \text { as } \delta \longrightarrow 0 .
$$
Therefore, by Proposition \ref{2prop1}, we obtain
$$
\left\|u-u_\delta\right\|_{L_{\widehat{\Phi}_{x}}\left(\mathbb{R}^N\right)} \longrightarrow 0 \text { as } \delta \longrightarrow 0 .
$$
Moreover, we have
$$
\int_{\mathbb{R}^N} \int_{\mathbb{R}^N} \Phi_{x,y}\left(|\left(u-u_j\right)(x)-\left(u-u_j\right)(y)|K(x,y)\right)d x d y \longrightarrow 0 \quad \text { as } j \longrightarrow \infty .
$$
Hence, for any fixed $\delta>0$, there exists $j_\delta \in \mathbb{N}$ such that
\begin{equation}\label{ineq1}
\int_{\mathbb{R}^N} \int_{\mathbb{R}^N} \Phi_{x,y}\left(|\left(u-u_{j_\delta}\right)(x)-\left(u-u_{j_\delta}\right)(y)|K(x,y)\right)d x d y \leq \delta .
\end{equation}
Notice that
$$
v_{j_\delta}(x, y)=\left|u_{j_\delta}(x)-u_{j_\delta}(y)\right|K(x,y)
$$
is supported in $\left\{P \in \mathbb{R}^N \times \mathbb{R}^N ;|P| \leq j_\delta+1\right\}$ and $\displaystyle\mu(A)=\int_A \int_A d x d y$ is finite over compact sets. Therefore, by using Lusin's theorem, we get that there exist a closed set $E_\delta \subset \mathbb{R}^N \times \mathbb{R}^N$ and a continuous and compactly supported function $u_\delta: \mathbb{R}^N \times \mathbb{R}^N \longrightarrow \mathbb{R}$ such that
$$
\begin{aligned}
u_\delta & =v_{j_\delta} \quad \text { in } \mathbb{R}^N \times \mathbb{R}^N \backslash E_\delta, \quad \mu\left(E_\delta\right) \leq \delta \quad \text { and } \\
\left\|u_{\delta}\right\|_{L^{\infty}\left(\mathbb{R}^N \times \mathbb{R}^N\right)} & \leq\left\|v_{j_{\delta}}\right\|_{L^{\infty}\left(\mathbb{R}^N \times \mathbb{R}^N\right)} .
\end{aligned}
$$
In particular, since $0 \leq \tau_{j_{\delta}}(x) \leq 1$, we have
$$
\left\|u_\delta\right\|_{L^{\infty}\left(\mathbb{R}^N \times \mathbb{R}^N\right)} \leq \left\|| u(x)-u(y)|K(x,y) \right\|_{L^{\infty}\left(\mathbb{R}^N \times \mathbb{R}^N\right)} .
$$
Now, by putting
$$
v(x, y):=|u(x)-u(y)|K(x,y)
$$
in Lemma \ref{2lem3}, we can assume that
$$
|u(x)-u(y)|K(x,y) \in L^{\infty}\left(\mathbb{R}^N \times \mathbb{R}^N\right) .
$$
Therefor
$$
\int_{\mathbb{R}^N} \int_{\mathbb{R}^N} \Phi_{x,y}\left(\left|\left(u-u_\delta\right)(x)-\left(u-u_\delta\right)(y)\right|K(x,y)\right)d x d y \longrightarrow 0 \quad \text { as } \delta \longrightarrow 0,
$$
which concludes the proof.
\end{proof}

\begin{theorem}\label{theo3.2}
We assume that \eqref{eq1.3}, \eqref{eq1.4} and \eqref{eq1.5} hold. Then the space $C_0^{\infty}\left(\mathbb{R}^N\right)$ is dense in $W^{s,\Phi_{x,y}}\left(\mathbb{R}^N\right)$.
\end{theorem}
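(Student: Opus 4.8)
The plan is to prove density in two stages, each of which has essentially been prepared in Section~\ref{sec2}: first a cut-off, which replaces a general $u\in W^{s,\Phi_{x,y}}(\mathbb{R}^N)$ by a compactly supported element of the same space, and then a Friedrichs mollification, which upgrades a compactly supported element to a function in $C_0^\infty(\mathbb{R}^N)$ without enlarging the support beyond an arbitrarily small dilation. The entire modular content — the $\Delta_2$-type bounds issuing from \eqref{eq1.5}, the translation invariance \eqref{eq1.4} used to handle convolutions, and the dominated-convergence arguments — has already been absorbed into Lemmas~\ref{2lem4}, \ref{lem2.8} and \ref{lem2.06}, so what remains is a two-step triangle-inequality argument; I would invoke those lemmas as black boxes.

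First I would cut off. Fix $u\in W^{s,\Phi_{x,y}}(\mathbb{R}^N)$ and $\eta>0$, and let $\tau_j\in C^\infty(\mathbb{R}^N)$ be the cut-off functions of \eqref{eqcut}. By Lemma~\ref{2lem4}, $\tau_j u\in W^{s,\Phi_{x,y}}(\mathbb{R}^N)$ for every $j$, and by Lemma~\ref{lem2.8} one has $\operatorname{supp}(\tau_j u)\subseteq\overline{B}_{j+1}$ and $\|\tau_j u-u\|_{W^{s,\Phi_{x,y}}(\mathbb{R}^N)}\to 0$ as $j\to+\infty$. Hence I would pick $j=j_\eta$ so large that $\|\tau_{j_\eta}u-u\|_{W^{s,\Phi_{x,y}}(\mathbb{R}^N)}<\eta/2$ and set $v:=\tau_{j_\eta}u$, which belongs to $W^{s,\Phi_{x,y}}(\mathbb{R}^N)$ and has compact support contained in $\overline{B}_{j_\eta+1}$.

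Then I would mollify $v$. For $\varepsilon>0$ put $v_\varepsilon=v*J_\varepsilon\in C^\infty(\mathbb{R}^N)$, the convolution introduced just before Lemma~\ref{lemmacorollary}. Since $\operatorname{supp}v\subseteq\overline{B}_{j_\eta+1}$ and $\operatorname{supp}J_\varepsilon\subseteq\overline{B}_\varepsilon$, the support inclusion $\operatorname{supp}(v*J_\varepsilon)\subseteq\operatorname{supp}v+\overline{B}_\varepsilon$ gives $\operatorname{supp}v_\varepsilon\subseteq\overline{B}_{j_\eta+1+\varepsilon}$, so $v_\varepsilon\in C_0^\infty(\mathbb{R}^N)$. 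By Lemma~\ref{lem2.06} — applicable because \eqref{eq1.3}, \eqref{eq1.4}, \eqref{eq1.5} are assumed — we have $\|v-v_\varepsilon\|_{W^{s,\Phi_{x,y}}(\mathbb{R}^N)}\to 0$ as $\varepsilon\to 0$, so I would choose $\varepsilon=\varepsilon_\eta>0$ small enough that $\|v-v_{\varepsilon_\eta}\|_{W^{s,\Phi_{x,y}}(\mathbb{R}^N)}<\eta/2$. Setting $w:=v_{\varepsilon_\eta}\in C_0^\infty(\mathbb{R}^N)$ and using the triangle inequality for the norm \eqref{Norm},
$$
\|u-w\|_{W^{s,\Phi_{x,y}}(\mathbb{R}^N)}\leq\|u-v\|_{W^{s,\Phi_{x,y}}(\mathbb{R}^N)}+\|v-w\|_{W^{s,\Phi_{x,y}}(\mathbb{R}^N)}<\eta,
$$
and since $\eta>0$ and $u$ are arbitrary, this would establish the density of $C_0^\infty(\mathbb{R}^N)$ in $W^{s,\Phi_{x,y}}(\mathbb{R}^N)$.

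I do not expect a substantial obstacle. The genuinely delicate estimates — bounding the Gagliardo modular of $u-u_\varepsilon$ through Jensen's inequality together with the translation invariance \eqref{eq1.4}, and bounding the modular of the cut-off error $u-\tau_ju$ by dominated convergence using the $\Delta_2$-consequences of \eqref{eq1.5} — are precisely the content of Lemmas~\ref{lem2.06} and \ref{lem2.8}. The only point that must be checked directly is the elementary one that convolving a compactly supported function with $J_\varepsilon$ keeps it both smooth and compactly supported; and one must be mindful that the order of the two reductions is forced, namely cut off first and mollify afterwards, because mollification by itself does not produce compact support.
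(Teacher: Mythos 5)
Your proof is correct and follows essentially the same route as the paper: cut off with $\tau_j$ via Lemma~\ref{lem2.8} (and Lemma~\ref{2lem4}), mollify the truncation with $J_\varepsilon$ using Lemma~\ref{lem2.06}, check the support inclusion $\operatorname{supp}(\tau_j u * J_\varepsilon)\subseteq\operatorname{supp}(\tau_j u)+\overline{B}_\varepsilon$, and conclude by the triangle inequality. No substantive difference from the paper's argument.
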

\begin{proof}
 We show that for any $u \in W^{s,\Phi_{x,y}}\left(\mathbb{R}^N\right)$, there exists a sequence $\rho_{\varepsilon} \in$ $C_0^{\infty}\left(\mathbb{R}^N\right)$ such that
$$
\left\|\rho_{\varepsilon}-u\right\|_{W^{s,\Phi_{x,y}}\left(\mathbb{R}^N\right)} \longrightarrow 0 \quad \text { as } \varepsilon \longrightarrow 0 .
$$
Let $u \in W^{s,\Phi_{x,y}}\left(\mathbb{R}^N\right)$, and let us fix $\delta>0$. Let $\tau_j$ be as in Section \ref{sec2}. From Lemma \ref{lem2.8}, we have
$$
\left\|u-\tau_j u\right\|_{W^{s,\Phi_{x,y}}\left(\mathbb{R}^N\right)} \leq \frac{\delta}{2},
$$
for $j$ large enough.
For any $\varepsilon>0$, let us consider
$$
\rho_{\varepsilon}:=\tau_j u * J_{\varepsilon},
$$
where $J_{\varepsilon}$ is the mollifier function defined in Section \ref{sec2}. By construction, $\rho_{\varepsilon} \in$ $C^{\infty}\left(\mathbb{R}^N\right)$. Moreover, by \cite[Proposition~IV.18]{haim1983analyse}, we have
$$
\operatorname{supp} \rho_{\varepsilon} \subseteq \operatorname{supp}\left(\tau_j u\right)+\overline{B}_{\varepsilon} .
$$
Also, by Lemma \ref{lem2.8}, we have
$$
\operatorname{supp}\left(\tau_j u\right) \subseteq \overline{B}_{j+1} \cap \operatorname{supp} u .
$$
Thus,
$$
\operatorname{supp} \rho_{\varepsilon} \subseteq\left(\overline{B}_{j+1} \cap \operatorname{supp} u\right)+\overline{B}_{\varepsilon}.
$$
Hence,
$$
\rho_{\varepsilon} \in C_0^{\infty}\left(\mathbb{R}^N\right),
$$
for $\varepsilon$ small enough. Furthermore, by Lemma \ref{lem2.06}, we have
$$
\left\|\rho_{\varepsilon}-\tau_j u\right\|_{W^{s,\Phi_{x,y}}\left(\mathbb{R}^N\right)} \leq \frac{\delta}{2}
$$
for $\varepsilon$ small enough. Therefore,
$$
\begin{aligned}
\left\|u-\rho_{\varepsilon}\right\|_{W^{s,\Phi_{x,y}}\left(\mathbb{R}^N\right)} & \leq\left\|u-\tau_j u\right\|_{W^{s,\Phi_{x,y}}\left(\mathbb{R}^N\right)}+\left\|\tau_j u-\rho_{\varepsilon}\right\|_{W^{s,\Phi_{x,y}}\left(\mathbb{R}^N\right)} \\
& \leq \frac{\delta}{2}+\frac{\delta}{2} \\
& \leq \delta
\end{aligned}
$$
Since $\delta$ can be taken arbitrarily small, then the proof follows.
\end{proof}

\begin{theorem}\label{theo3.3}
We assume that \eqref{eq1.3}, \eqref{eq1.4} and \eqref{eq1.5} hold and suppose that $\Omega$ is a $W^{s,\Phi_{x,y}}$-extension domain. Then $C^{\infty}(\overline{\Omega})$ is dense in $W^{s,\Phi_{x,y}}(\Omega)$.
\end{theorem}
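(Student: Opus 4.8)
The plan is to reduce the assertion to Theorem~\ref{theo3.2} by means of the extension operator, since the hypotheses of Theorem~\ref{theo3.3} are precisely those of Theorem~\ref{theo3.2} together with the extension-domain assumption. Let $u \in W^{s,\Phi_{x,y}}(\Omega)$ be arbitrary and fix $\delta>0$. Because $\Omega$ is a $W^{s,\Phi_{x,y}}$-extension domain, there is a continuous linear operator $\mathcal{E}:W^{s,\Phi_{x,y}}(\Omega)\to W^{s,\Phi_{x,y}}(\mathbb{R}^N)$ with $\mathcal{E}u|_{\Omega}=u$; set $\widetilde u=\mathcal{E}u\in W^{s,\Phi_{x,y}}(\mathbb{R}^N)$. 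Applying Theorem~\ref{theo3.2} to $\widetilde u$, there exists $\rho\in C_0^{\infty}(\mathbb{R}^N)$ with $\|\rho-\widetilde u\|_{W^{s,\Phi_{x,y}}(\mathbb{R}^N)}\le\delta$.

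The next step is to observe that the restriction map from $W^{s,\Phi_{x,y}}(\mathbb{R}^N)$ to $W^{s,\Phi_{x,y}}(\Omega)$ does not increase the norm. For any measurable $v$ on $\mathbb{R}^N$ we have $\int_{\Omega}\widehat{\Phi}_x(|v|/\lambda)\,dx\le\int_{\mathbb{R}^N}\widehat{\Phi}_x(|v|/\lambda)\,dx$ for every $\lambda>0$, because $\Omega\subseteq\mathbb{R}^N$ and the integrand is non-negative; hence every admissible $\lambda$ for $\|v\|_{L_{\widehat{\Phi}_x}(\mathbb{R}^N)}$ is admissible for $\|v\|_{L_{\widehat{\Phi}_x}(\Omega)}$, and taking infima gives $\|v\|_{L_{\widehat{\Phi}_x}(\Omega)}\le\|v\|_{L_{\widehat{\Phi}_x}(\mathbb{R}^N)}$. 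The identical monotonicity argument on $\Omega\times\Omega\subseteq\mathbb{R}^N\times\mathbb{R}^N$ (the kernel $K(x,y)$ is independent of the domain) yields the corresponding inequality for the Gagliardo seminorm, and adding the two inequalities gives $\|v\|_{W^{s,\Phi_{x,y}}(\Omega)}\le\|v\|_{W^{s,\Phi_{x,y}}(\mathbb{R}^N)}$. Applying this with $v=\rho-\widetilde u$ and using $\widetilde u|_{\Omega}=u$ together with $\rho|_{\Omega}\in C^{\infty}(\overline{\Omega})$ — the restriction to $\overline{\Omega}$ of a function that is smooth on $\mathbb{R}^N$ with compact support has bounded, uniformly continuous derivatives of every order — we obtain $\|\rho|_{\Omega}-u\|_{W^{s,\Phi_{x,y}}(\Omega)}\le\|\rho-\widetilde u\|_{W^{s,\Phi_{x,y}}(\mathbb{R}^N)}\le\delta$. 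Since $\delta>0$ is arbitrary, $C^{\infty}(\overline{\Omega})$ is dense in $W^{s,\Phi_{x,y}}(\Omega)$.

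I do not expect a genuine obstacle here: the analytic work has been front-loaded into Theorem~\ref{theo3.2} (the mollification plus cut-off argument of Section~\ref{sec2}), and the extension-domain hypothesis removes the only global difficulty, namely producing a global smooth approximant. The two points that still require a word of justification are soft ones — the monotonicity of the Luxemburg norm and the Gagliardo seminorm under domain restriction (a one-line set-inclusion argument), and the verification that the restriction of an element of $C_0^{\infty}(\mathbb{R}^N)$ genuinely belongs to the space $C^{\infty}(\overline{\Omega})$ as defined at the beginning of the paper.
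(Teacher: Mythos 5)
Your proposal is correct and follows essentially the same route as the paper: extend $u$ to $\widetilde u\in W^{s,\Phi_{x,y}}(\mathbb{R}^N)$ via the extension operator, approximate $\widetilde u$ by $C_0^{\infty}(\mathbb{R}^N)$ functions using Theorem~\ref{theo3.2}, and restrict back to $\Omega$, noting that restriction does not increase the norm. Your extra remarks on the Luxemburg-norm monotonicity and on why $\rho|_{\Omega}\in C^{\infty}(\overline{\Omega})$ are fine details the paper leaves implicit.
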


\begin{proof}
Let $u \in W^{s,\Phi_{x,y}}(\Omega)$. Since $\Omega$ is a $W^{s,\Phi_{x,y}}$-extension domain, then there exist $\widetilde{u} \in W^{s,\Phi_{x,y}}(\mathbb{R}^{N})$ such that $\widetilde{u}(x)=u(x)$ for all $x \in \Omega$ and
$$
\|\widetilde{u}\|_{W^{s,\Phi_{x,y}}\left(\mathbb{R}^N\right)} \leq C\|u\|_{W^{s,\Phi_{x,y}}(\Omega)} .
$$
Thus, according to Theorem \ref{theo3.2}, we can choose $\widetilde{u}_{\varepsilon} \in C_0^{\infty}\left(\mathbb{R}^N\right)$ with
$$
\widetilde{u}_{\varepsilon} \longrightarrow \widetilde{u} \quad \text { in } W^{s,\Phi_{x,y}}\left(\mathbb{R}^N\right) .
$$
We set $u_{\varepsilon}:=\left.\widetilde{u}_{\varepsilon}\right|_{\Omega}$. Thus,
$$
\left\|u-u_{\varepsilon}\right\|_{W^{s,\Phi_{x,y}}(\Omega)} \leq\left\|\widetilde{u}-\widetilde{u}_{\varepsilon}\right\|_{W^{s,\Phi_{x,y}}\left(\mathbb{R}^N\right)} \longrightarrow 0 .
$$
Hence $u_{\varepsilon} \in C^{\infty}(\overline{\Omega})$ are the required approximating functions.
\end{proof}

\bibliographystyle{plain}

\end{document}